\documentclass[reqno,tbtags,sumlimits,intlimits,a4paper,oneside,11pt]{amsart}
\def\x{2.95}
\usepackage[lmargin=\x cm,rmargin=\x cm,tmargin=\x cm,bmargin=\x cm,bindingoffset=0cm]{geometry}
\usepackage{amsthm}
\usepackage{amsmath}
\usepackage{amssymb}
\usepackage{amsfonts}
\usepackage{mathrsfs}
\usepackage{tikz}
\usetikzlibrary{decorations.markings}
\usetikzlibrary{cd}
\usepackage{url}
\usepackage{color} 
\usepackage{blkarray}
\usepackage[shortlabels]{enumitem}

\usepackage{textcomp}
\usepackage{mathscinet}
\usepackage{etoolbox}
\usepackage{mathtools}
\usepackage[backend=biber, 
    style=numeric,
    sorting=nyt,
    date=year, 
    hyperref=false,
    arxiv=abs,  
    url=false, 
    doi,
    eprint, 
    giveninits,
    ]{biblatex}
\addbibresource{bibliography.bib}
\appto{\bibsetup}{\sloppy}

\newcommand{\mc}[1]{\mathcal{#1}}
\newcommand{\mb}[1]{\mathbb{#1}}
\newcommand{\mf}[1]{\mathfrak{#1}}
\newcommand{\mr}[1]{\mathrm{#1}}

\newcommand{\rd}{\mathrm{d}}
\newcommand{\uh}{\mathrm{H}^2}

\newcommand{\hypsp}[1]{\mathrm{H}^{#1}}
\newcommand{\eucsp}[1]{\mathrm{E}^{#1}}
\newcommand{\sphsp}[1]{\mathrm{S}^{#1}}

\def\beq#1#2\eeq{%
        \begin{equation}%
        \label{#1}%
            #2%
        \end{equation}%
    }
\newcommand{\SLNZ}[1][2]{\mathrm{SL}({#1},\mb{Z})}
\newcommand{\SLNC}[1][n]{\mathrm{SL}({#1},\mb{C})}
\newcommand{\slnc}[1][n]{\mf{sl}({#1},\mb{C})}

\newcommand{\rg}{G}
\newcommand{\ta}{\mf a}


\newcommand{\Res}[3]{\mathrm{Res}^{#1}_{#2}\left(#3\right)}

\newcommand{\Id}{\mathrm{Id}}

\newcommand{\GL}{\mathrm{GL}}
\newcommand{\SL}{\mathrm{SL}}
\newcommand{\SO}{\mathrm{SO}}
\newcommand{\SU}{\mathrm{SU}}
\newcommand{\NSO}{\mathrm{O}}
\newcommand{\NSU}{\mathrm{U}}

\newcommand{\SE}{\mathrm{SE}}

\newcommand{\PSL}{\mathrm{PSL}}

\newcommand{\PGL}{\mathrm{PGL}}
\newcommand{\PSU}{\mathrm{PSU}}

\newcommand{\Hom}{\mathrm{Hom}}
\newcommand{\End}{\mathrm{End}}
\newcommand{\Aut}[1]{\mathrm{Aut}\!\left(#1 \right)}

\newcommand{\Ad}{\mathrm{Ad}}
\newcommand{\ad}{\mathrm{ad}}

\newcommand{\diag}{\mathrm{diag}}

\newtheorem{Theorem}{Theorem}[section]
\newtheorem{Proposition}[Theorem]{Proposition}
\newtheorem{Lemma}[Theorem]{Lemma}
\newtheorem{Corollary}[Theorem]{Corollary}

\newtheorem{Example}[Theorem]{Example}

\newtheorem{Remark}[Theorem]{Remark}

\setcounter{MaxMatrixCols}{20}

\newcommand{\field}{\mb{F}}
\newcommand{\mapone}[1]{M(#1)}
\newcommand{\maptwo}[2]{M(#1,#2)}
\newcommand{\mapthree}[3]{M(#1,#2,#3)}
\newcommand{\emap}[3]{M_{#1}(#2,#3)}
\newcommand{\ma}[3]{\mathfrak{M}(#1,#2,#3)}
\newcommand{\ema}[3]{\mathfrak{A}(#1,#2,#3)}
\newcommand{\aaa}[3]{\mathfrak{M}_{#1}(#2,#3)}

\newcommand{\hs}{X}

\title[Computing equivariant matrices on homogeneous spaces]{Computing equivariant matrices on homogeneous spaces for Geometric Deep Learning and Automorphic Lie Algebras}

\author{Vincent Knibbeler}
\address{Department of Mathematical Sciences, Loughborough University, Loughborough LE11 3TU, UK}
\address{Maxwell Institute for Mathematical Sciences, The Bayes Centre, Edinburgh EH8 9BT, UK}
\address{Department of Mathematics, Heriot-Watt University, Edinburgh EH14 4AS, UK}
\email{V.Knibbeler@gmail.com}

\begin{document}

\date{\today}

\begin{abstract}
We develop an elementary method to compute spaces of equivariant maps from a homogeneous space $G/H$ of a Lie group $G$ to a module of this group. The Lie group is not required to be compact. More generally, we study spaces of invariant sections in homogeneous vector bundles, and take a special interest in the case where the fibres are algebras. These latter cases have a natural global algebra structure. We classify these automorphic algebras for the case where the homogeneous space has compact stabilisers.
This work has applications in the theoretical development of geometric deep learning and also in the theory of automorphic Lie algebras. 
\end{abstract}

\maketitle

\vspace{3mm}
\noindent\textit{Mathematics Subject Classification}:
53Z50, 
68T07, 
16Z05, 
43A85, 

\vspace{3mm}
\noindent\textit{Keywords}:
geometric deep learning, equivariant convolutional kernels, automorphic Lie algebras, homogeneous space, geometric Frobenius reciprocity

\section{Introduction}
\label{sec:introduction}
Let $G$ be a Lie group and $X$ a homogeneous space of $G$. That is, a smooth manifold with transitive $G$-action: for any two points $x_0,x\in \hs$ there exists $g\in G$ with $g x_0=x$. Let $V$ be a representation of $G$ and denote the space of $G$-equivariant maps $\hs\to V$ by 
$$\emap{G}{\hs}{V}=\{K:\hs\to V\,\vert \,K(gx)=gK(x)\;\forall g\in G,\,\forall x\in\hs\}.$$
We develop an elementary method to compute this space. More generally, for any closed subgroup $H$ of $G$ and representation $W$ of $H$ we compute the $G$-invariant sections of the homogeneous vector bundle $G\times_H W$ over $G/H$, as we explain in detail in Section \ref{sec:vector}. 
In Section \ref{sec:algebra} we replace $V$ by an algebra $\ta$ with compatible $G$-module structure, and study the algebra
$\aaa{\rg}{\hs}{\ta}$
of $\rg$-equivariant maps $\hs\to\ta$.

This investigation is motivated on one hand by recent developments of geometric deep learning, and on the other hand by the expanding horizons of research on automorphic Lie algebras.
The connection between these two topics has not been studied so far, but they are dealing with closely related problems.

This paper can be read without knowledge of deep learning and automorphic Lie algebras, because we only tackle the differential geometric problem described above. Nonetheless, we give a brief description of the two areas in this introduction and explain why they motivate our objective. One can skip to Section \ref{sec:vector} from here. 
\vspace{3mm}

Artificial intelligence is currently developing at a rapid pace and there is a great need for mathematics in this field, as is explained in the recent survey article by Kutyniok \cite{kutyniok2022mathematics}. A particular challenge where researchers are looking to mathematics for answers is the search for good architectures of deep neural networks. One approach to this problem is the highly active field of geometric deep learning.

Two surveys articles on geometric deep learning appeared in May 2021, by Bronstein, Bruna, Cohen and Veli{\v{c}}kovi{\'c} \cite{bronstein2021geometric} and Gerken et al.~\cite{gerken2023geometric} and another by Weiler, Forr{\'e}, Verlinde and Welling \cite{weiler2021coordinate} appeared only a month later. We will not attempt to provide an overview of the literature but simply refer to the combined literature reviews in these three works, which we would not be able to improve. We do mention however the paper of Cohen, Weiler, Kicanaoglu and Welling \cite{cohen2019gauge} and Cheng et al. \cite{cheng2019covariance}, which highlight the importance of convolution with equivariant kernels. These papers inspired this research: we compute spaces of equivariant kernels.

We start our slightly longer summary of the geometric deep learning that motivated this paper with
Cohen, Geiger and Weiler's description of equivariant convolutional networks on homogeneous spaces \cite{cohen2019a}. The features of such convolutional networks are sections of homogeneous vector bundles $G\times_{H} V$, which can be presented as maps $f:G\to V$ such that $f(gh)=h^{-1}f(g)$ for all $h\in H$ (see Section \ref{sec:vector} for more details). 
The authors use the fact that cross-correlation
on the group
\beq{eq:cross}(\kappa\star f)(g)=\int_G\kappa(g^{-1}h)f(h)\rd h
\eeq is equivariant under the group $G$ in the sense that
$$\kappa\star (u\cdot f)=u\cdot (\kappa\star f),\quad \forall u\in G,$$ 
granted that $\rd h$ is a left-invariant measure on $G$\footnote{cross-correlation is sometimes called convolution in the literature, although the term convolution is classically reserved for the operation $(\kappa\ast f)(g)=\int_G\kappa(gh^{-1})f(h)\rd h$, which is not $G$-equivariant.}. The map $\kappa:G\to \Hom(V ,V_{\mr{out}})$ is called the kernel, or the filter, and the group acts as $u\cdot f=f\circ u^{-1}$.
The main result of \cite{cohen2019a} shows that $\kappa\star$ takes sections of $G\times_{H} V_{\mr{in}}$ to sections of $G\times_{H} V_{\mr{out}}$ if and only if $\kappa$ descends to a section of the homogeneous vector bundle $G\times_{H\times H} \Hom(V_{\mr{in}},V_{\mr{out}})$ defined by the right action of $H\times H$ on $G\times  \Hom(V_{\mr{in}},V_{\mr{out}})$ given by $(g,m)\cdot(h_1,h_2)=(h_2^{-1}\,g\,h_1,\,\rho_{\mr{out}}(h_2)^{-1}\,m\,\rho_{\mr{in}}(h_1))$.

Geometric deep learning developed further in 
the papers by Cohen, Weiler, Kicanaoglu and Welling \cite{cohen2019gauge} and Cheng et al. \cite{cheng2019covariance} where one considers convolutional networks on manifolds $M$ in general, rather than homogeneous spaces specifically. This adds another level of abstraction that we briefly describe. 
For a comprehensive treatment, we refer to \cite{weiler2021coordinate}.

Lacking a transitive group action on a manifold $M$, one has to find another way to move a filter over the manifold. The solution proposed in \cite{cohen2019gauge,cheng2019covariance} is to use parallel transport.

The $i$-th feature space is the space of sections $\Gamma_i$ of a vector bundle $\pi_i:E_i\to M$.
Features $f_i$ are elements of $\Gamma_i$.
The kernel is defined by a map
$$K:T_{x^*}M\to\Hom(\pi_{\mr{in}}^{-1}(x^*), \pi_{\mr{out}}^{-1}(x^*))$$
on the tangent space of $M$ at the point $x^*$, and the convolution is then defined by  an integral of the form
\beq{eq:convolution}
(K\ast f_{\mr{in}})(x)= \int_{B\subset T_{x^*}M } K(v) f_{\mr{in}}\vert _{\exp_x v}(x) \,\rd^d v
\eeq
where $f_{\mr{in}}\vert _{\exp_x v}(x)$ 
is the parallel transport of $f_{\mr{in}}(\exp_x v)$
along the geodesic from $\exp_x v$ to  $x$, and $d$ the dimension of $M$.

The tangent space $T_{x^*}M$ is isomorphic to $\mb R^d$. A family of such isomorphisms parametrised by $M$ is called a gauge. A choice of gauge allows us to consider the kernel as a map
$$K:\mb R^d \to\Hom(V_{\mr{in}}, V_{\mr{out}})$$
where we keep notation light by using the same letter $K$, and renaming the fibres.

Because the choice of gauge is arbitrary, convolution has to be invariant under any gauge transformation in the structure group $G\subset \GL(d,\mb R)$. The action of the structure group on the feature spaces is denoted $\rho_{\mr{in}}:G\to\GL(V_{\mr{in}})$ and $\rho_{\mr{out}}:G\to\GL(V_{\mr{out}})$. Weiler et al.~\cite{weiler2021coordinate} show that the convolution is invariant under gauge transformations if an only if
\begin{equation}
    \label{eq:kernel constraint}
    K(g v)=\det(g)^{-1}\,\rho_{\mr{out}}(g) K(v)\rho_{\mr{in}}(g)^{-1},\quad g\in G,\quad v\in \mb R^d.
\end{equation}
Cheng et al.~\cite{cheng2019covariance} show that if the maps $\Gamma_{\mr{in}}\to\Gamma_{\mr{out}}$ between feature spaces satisfy some reasonable conditions, then they are of the form (\ref{eq:convolution}).

The Kernel constraint (\ref{eq:kernel constraint}) is solved for subgroups $G$ of $\NSO(2)$ by Weiler and Cesa \cite{weiler2019general} and for $G=\SO(3)$ by Weiler et al. \cite{weiler20183d}. For compact groups in general, the solution can be expressed in terms of harmonic basis functions, as Lang and Weiler explain \cite{lang2020wigner}. This leaves the constraint still to be solved for noncompact groups and for those compact groups for which we do not have harmonic basis functions readily available.

We propose to compute the space of $G$-equivariant kernels $K$ on $\mb R^d$ by considering the $G$-orbits in $\mb R^d$ separately. Although it is not always a trivial task to combine the results on these orbits in order to describe the kernels on $\mb R^d$ (see Appendix \ref{app:from homogeneous space to R^d} for worked out examples), the advantage is that the computation of equivariant kernels on the orbits, homogeneous spaces $\hs$, can be done with simple methods that work equally well for noncompact groups as for compact ones.

Therefore we will be looking for the $G$-equivariant maps $\hs\to V$ where $V=\det^{*}\otimes V_{\mr{out}}\otimes V_{\mr{in}}^*$. 
The special case $\det\otimes V_{\mr{in}}=V_{\mr{out}}$ comes with an associative algebra structure (matrix multiplication) and is therefore most interesting to us (the case $V_{\mr{in}}=V_{\mr{out}}$ also allows the use of biases in neural networks and this case is highlighted in the discussion of \cite{cheng2019covariance} due to interesting analogies in physics).

To ensure the integrals (\ref{eq:cross}) and (\ref{eq:convolution}) are finite, the kernels are often required to have compact support. We want to study equivariant kernels for noncompact groups $G$, which do not have compact support, so we do not impose this condition. The reader interested in convolution is advised to keep this in mind.

\vspace{3mm}
Automorphic Lie algebras were introduced in relation to integrable systems in mathematical physics by Lombardo and Mikhailov \cite{lombardo2004reductions, lombardo2005reduction}. They are Lie algebras of meromorphic maps from a Riemann surface $\hs$ into a finite dimensional Lie algebra $\mf{g}$ which are equivariant with respect to a group $\Gamma$ acting discretely on $\hs$ and on $\mf{g}$ by automorphisms, and have some restriction on the poles. The best known examples are the twisted loop algebras 
$$\mc{L}(\mf g, \sigma, m)=\left\{f:\mb C^\ast\to\mf g\,\,\text{Laurent polynomial}\,\vert\,f\left(e^{\frac{2\pi i}{m}} z\right)=\sigma f(z)\right\}$$
where $\sigma$ is an automorphism of order $m$ of the Lie algebra $\mf g$. Twisted loop algebras are used in concrete realisations of affine Kac-Moody algebras. These are however somewhat misleading examples because they are the only simple automorphic Lie algebras due to the landmark papers by Kac and Mathieu \cite{kac1968simple,mathieu1992classification}: loop algebras are highly exceptional.

The algebra of Onsager \cite{onsager1944crystal} gives a better impression of the nature of the topic. In the original work of Onsager, it is not at all obvious that this Lie algebra $\mf O$ fits in the framework of automorphic Lie algebras, but Roan showed that this is indeed the case \cite{roan1991onsager}, with the construction of an isomorphism
$$\mf{O} \cong\left\{f:\mb C^\ast\to \slnc[2] \,\,\text{Laurent polynomial}\,\vert\,f\left(z^{-1}\right)=\sigma f(z)\right\}$$
where $\slnc[2]$ is the Lie algebra of traceless complex $2\times 2$ matrices and $\sigma$ is an order $2$ automorphism thereof. The Onsager algebra is not simple, the commutator subalgebra $[\mf O, \mf O]$ is a nontrivial ideal. The involved action of the $2$ element group on $\mb C^\ast$  given by $z \mapsto z^{-1}$ has nontrivial stabilisers at $z=1$ and $z=-1$. The automorphic Lie algebra evaluates to a lower dimensional algebra at these points than elsewhere. This is the more typical situation. In fact, many automorphic Lie algebras with $\slnc[2]$ as target Lie algebra are isomorphic to the Onsager algebra, even if the involved group actions are more complicated. Examples abound in \cite{lombardo2010on, bury2021automorphic, 10.1093/imrn/rnab376, knibbeler2022classification}.

Before the recent work of Lombardo, Veselov and the author \cite{10.1093/imrn/rnab376}, the groups $\Gamma$ used in the construction of automorphic Lie algebras had been finite. The successful study of algebras of $\SLNZ[2]$-equivariant matrices shows that the infinite discrete groups are in reach, and there is an optimism that we can start to understand automorphic Lie algebras in various geometries.
A first step in this direction, studied in this paper, is to simplify the situation to the case of continuous (Lie) groups (which contain the desired discrete groups) and their homogeneous spaces. This enables us to obtain subalgebras of the desired automorphic Lie algebras.

There are methods to compute invariants of discrete groups from other invariants, such as transvection \cite{olver1999classical}, Rankin-Cohen brackets and modular derivatives \cite{bruinier2008the}. These methods have been proven effective in the development of automorphic Lie algebras by Lombardo, Sanders and the author \cite{knibbeler2017higher} and Lombardo, Veselov and the author \cite{10.1093/imrn/rnab376}, but they do have a weakness: you need an invariant to start with, before you can compute more of them. The subalgebras of automorphic Lie algebras that can be computed with the method of this paper provide a starting point for discrete methods and further study of automorphic Lie algebras in new geometries.

\vspace{3mm}
Various examples have been selected to illustrate the theory and inspire further development of geometric deep learning and automorphic Lie algebras.

\section{Invariant sections of homogeneous vector bundles}
\label{sec:vector}
Throughout this section, let $\hs$ be a homogeneous space of a Lie group $G$, and $V$ a representation of $G$. Our main practical contribution is this
\\[2mm]{\bf Elementary method to compute all $G$-equivariant maps $\hs\to V$.}
\vspace{-1mm}
\begin{description}
\item[1] Choose a base point $x_0\in \hs$.
\item[2]\label{it:H} Determine $H\coloneqq G_{x_0}=\{g\in G\,\vert \,g\,x_0=x_0\}$.
\item[3]\label{it:\hs} Find a map $f:\hs\to G$ such that $f(x)x_0=x$. 
\item[4]\label{it:V^H} Compute a basis $\{v_1,\ldots,v_n\}$ for the invariants $V^H$.
\end{description}
\vspace{-1mm}
The maps $x\mapsto f(x)v_1,\ldots,x\mapsto f(x)v_n$ constitute a basis for $\emap{G}{\hs}{V}$.

\vspace{3mm}
A few comments are in order. 
If $f:\hs\to G$ is such that $f(x)x_0=x$ then $x\mapsto f(x)H$ is an isomorphism $\hs\to G/H$. Existence of this map is equivalent to transitivity of the action of $G$ on $\hs$. It is well known that such a map can be chosen to be smooth near any point, but in general there is no such map that is globally smooth\footnote{There is a smooth map $f:\hs\to G$ such that $f(x)x_0=x$ if and only if the $H$-bundle $G\to \hs$ is isomorphic to the trivial bundle $\hs\times H\to \hs$. Indeed, given such $f$, one can construct a map $G\to \hs\times H$ by $g\mapsto(g x_0,f(g x_0)^{-1}g)$. This is a bijection with inverse $(x,h)\mapsto f(x)h$, and hence an isomorphism of $H$-bundles. For the converse, suppose that $\phi:\hs\times H\to G$ is an isomorphism of $H$ bundles, i.e.~a smooth bijection such that $\phi(x,h)x_0=x$. Then we can define a smooth map $f:\hs\to G$ is such that $f(x)x_0=x$ by $f(x)=\phi(x,1)$.}. 

It is not well known how to find a map like $f$ and we are in fact not aware of a general method to tackle this problem. Luckily it is easy to do for many of the cases that appear in geometric deep learning applications. It is for instance trivial for Euclidean spaces $\eucsp{n}\cong\SE(n)/\SO(n)$ and for spherical spaces $\sphsp{n}\cong\SO(n+1)/\SO(n)$ it can be solved with the Gram-Schmidt process (cf.~ Appendix \ref{app:spheres as homogeneous space}). For hyperbolic spaces we did not find a general approach. The $2$-dimensional case is simple enough and for the $3$-dimensional case we present an ad hoc solution in Example \ref{ex:hyperbolic space}.

The computation of the invariants $V^H$ is simplified by moving to the Lie algebra level. That is, one determines a basis $B$ for the Lie algebra $\mf{h}$ of the Lie group $H$ and solves the system of linear equations $h v=0$ for $v\in V$ and $h\in B$. The complicating factor of this step is that the dimension of $V$ can be very large in deep learning applications. Finzi, Welling and Wilson \cite{finzi2021practical} approached this problem using singular value decompositions, and they made their software library publicly available.

Arriving at the solution, we can also see why we do not impose conditions on the maps we consider. It is the action of $G$ that determines the properties of equivariant maps on homogeneous spaces. For example, if the action is smooth, so are the maps.

Before we prove that the method works, we give an example.
\begin{Example}[The hyperbolic plane $\hypsp{2}$]
\label{ex:H2}
The hyperbolic plane $\uh$ can be realised as the space of complex numbers $\tau=x+iy$ with positive imaginary part, $y>0$. The group $\SL(2,\mb{R})$ acts transitively on $\uh$ by M{\"o}bius transformations $$\begin{pmatrix}a&b\\c&d\end{pmatrix}\cdot \tau=\frac{a\tau+b}{c\tau+d}.$$
If we let $\SL(2,\mb{R})$ act on $\End(\mb{R}^2)$ by conjugation then we can define the space of $\SL(2,\mb{R})$-equivariant maps $\uh\to \End(\mb{R}^2)$. This example fits in the setting above with $\hs=\uh$, $\rg=\SL(2,\mb{R})$ and $V=\End(\mb{R}^2)$. To see this, we must check that $\SL(2,\mb{R})$ can take one point in $\uh$, say $i$, to an arbitrary point $\tau=x+iy\in\uh$. A group element that does this is
$$f(\tau)=\begin{pmatrix}\sqrt{y}&x/\sqrt{y}\\0&1/\sqrt{y}\end{pmatrix}.$$

To carry out our method we pick a point $i\in\uh$ to take on the role of $x_0$, and determine that the subgroup $H$ of $\SL(2,\mb{R})$ fixing $i$ is $\SO(2)$.
The map $f$ defined above satisfies $f(\tau)\cdot i=\tau$, so the only part that is possibly challenging is already taken care of. It remains to compute a basis for $\End(\mb{R}^2)^{\SO(2)}$. For instance we can take
$$v_1=\begin{pmatrix}1&0\\0&1\end{pmatrix},\quad v_2=\begin{pmatrix}0&-1\\1&0\end{pmatrix}.$$
We then arrive at a basis of the space of equivariant matrix-valued maps given by $\tau\mapsto f(\tau)v_i f(\tau)^{-1}$, $i=1,2$. This first basis element is given by the identity matrix and the second reads
\begin{align*}
\tau=x+iy\mapsto 
\begin{pmatrix}\frac{x}{y}&-\frac{x^2+y^2}{y}\\[2mm]\frac{1}{y}&-\frac{x}{y}\end{pmatrix}.
\end{align*}

As a bonus we have in this case a real algebra structure, where the product is the pointwise matrix multiplication. Identifying the above matrix with the imaginary unit provides an isomorphism to the algebra of complex numbers.
\end{Example}

We will now prove that this computational method gives the correct answers.
Maps $\hs\to V$ can be identified with sections of the trivial vector bundle $\hs\times V$. We will generalise this set-up to homogeneous vector bundles, following Cohen et al.~\cite{cohen2019a} and Aronsson \cite{aronsson2022homogeneous}, which are nontrivial bundles over $\hs$ defined by a representation of $H$ rather than $G$, known for their role in the celebrated Borel-Weil theorem. The invariant sections of homogeneous vector bundles are easily determined. These are then mapped onto the desired space of equivariant maps $\hs\to V$.

The following is a standard construction of a homogeneous vector bundle (see for instance \cite[Section 7.4.1]{sepanski2007compact}). 
Let $W$ be a $H$-module and define a right action of $H$ on $G\times W$ by $(g,w)h=(gh,h^{-1}w)$. The orbit space $(G\times W)/H$ is denoted
$$G\times_H W.$$ An element of this space is of the form $[g,w]=\{(gh,h^{-1}w)\,\vert \,h\in H\}$. Together with the projection map $$\pi :G\times_H W\to G/H,\quad [g,w]\mapsto gH$$ this is a $W$-bundle over $G/H$. It carries a $G$-action given by $g_1[g,w]=[g_1g,w]$. This action restricts to linear maps between fibres and descends to a transitive action on $G/H$. The vector space $\Gamma (G/H, G\times_H W)$ of sections $s$ becomes a $G$-module by the action $$(g_1 s)(gH)=g_1 s(g_1^{-1}gH).$$
Notice that we have started with a representation $W$ of $H$ and ended up with a representation of $G$. 

These representation are commonly reformulated in the language of maps. Let $\mapone{\hs}$ denote the maps $\hs\to\field$, where $\field$ is $\mb R$ or $\mb C$, 
and $\maptwo{\hs}{V}$ the maps $\hs\to V$. The sections of the homogeneous vector bundle $G\times_H W$ can be identified with the maps
$$
\mapthree{G}{H}{W}=
\{f:G\to W\,\vert \,f(gh)=h^{-1}f(g),\; \forall h\in H, \forall g\in G\}
$$ with $G$-action $(g_1\cdot f)(g)=f(g_{1}^{-1} g)$. This identification is realised by sending a map $f\in\mapthree{G}{H}{W}$ to the section $s_f:gH\mapsto [g,f(g)]$. To write down the inverse of this map, we must first relate the fiber at $eH$ of the homogeneous vector bundle to $W$. This is done with the $H$-intertwining bijection $\phi:\pi^{-1}(eH)\to W$ sending $[h,w]$ to $hw$. The inverse mapping from $\Gamma (G/H, G\times_H W)$ to $\mapthree{G}{H}{W}$ is then given by sending $s$ to $f_s:g\mapsto \phi(g^{-1}s(gH))$. This identification intertwines the $G$-action.

The sections of the trivial vector bundle $G/H\times V$ can be identified with the maps
$$
\mapthree{G}{H}{\field}\otimes V\cong\{f:G\to V\,\vert \,f(gh)=f(g),\; \forall h\in H, \forall g\in G\}$$
with $G$-action $(g_1\cdot f)(g)=g_1 f(g_{1}^{-1} g)$ (where the field $\field$ is considered as the trivial representation of $H$). This group action corresponds to the standard action on the tensor product $M\otimes V$ of two representation, given by $g(m\otimes v)=(gm)\otimes(gv)$, on the left hand side of the isomorphism. 

\begin{Remark}
If $G$ is compact then $\mapthree{G}{H}{W}$ is known as the representation of $G$ induced by the representation $W$ of $H$. For locally compact groups $G$ such as the Lie groups we work with, induced representations have been defined as various subrepresentations of $\mapthree{G}{H}{W}$. Mackey required functions to be $L^2$ (square integrable) with respect to a well chosen measure \cite{mackey1952induced}, and Moore worked with $L^1$ functions \cite{moore1962on}. For our simpler objective it is neither desirable nor necessary to make any of these restrictions.
\end{Remark}
The next result identifies the homogeneous vector bundles to the representation spaces where we look for automorphic Lie algebras and convolutional kernels for geometric deep learning.
\begin{Lemma}
\label{lem:homogeneous vb to trivial vb}
Let $V$ be a representation of $G$ and $W$ a representation of $H$. Let $\field$ denote the trivial representation of $H$. Then
$$\Phi:\mapthree{G}{H}{W}\to \mapthree{G}{H}{\field}\otimes V,\quad \Phi(f)(g)=gf(g)$$
is an isomorphism of $G$-representations if and only if $W$ is the restriction of $V$ to $H$.
\end{Lemma}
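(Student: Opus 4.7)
The plan is to prove the $(\Leftarrow)$ direction by exhibiting an explicit inverse and verifying all required properties by direct computation, and then derive the $(\Rightarrow)$ direction by examining what the formula $\Phi(f)(g)=gf(g)$ forces on $W$.

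For the sufficiency direction, assume $W = V|_H$. Define a candidate inverse
$$\Psi:\mapthree{G}{H}{\field}\otimes V\to \mapthree{G}{H}{W},\qquad \Psi(f')(g)=g^{-1}f'(g),$$
where the target is identified, as in the text, with right-$H$-invariant maps $G\to V$. I would then verify four things in turn. First, $\Phi(f)$ lands in the correct space: using that the $H$-action on $W$ equals the restriction of the $G$-action on $V$, compute $\Phi(f)(gh)=gh\,f(gh)=gh\,h^{-1}f(g)=gf(g)=\Phi(f)(g)$, so it is right-$H$-invariant. Second, $\Psi(f')$ satisfies the $H$-equivariance condition for $\mapthree{G}{H}{W}$: $\Psi(f')(gh)=(gh)^{-1}f'(gh)=h^{-1}g^{-1}f'(g)=h^{-1}\Psi(f')(g)$. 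Third, $\Phi\circ\Psi=\id$ and $\Psi\circ\Phi=\id$ follow immediately from $g\cdot g^{-1}=e$. Fourth, $G$-equivariance of $\Phi$ is the computation
$$\Phi(g_1\cdot f)(g)=g\,f(g_1^{-1}g)=g_1\cdot(g_1^{-1}g)\,f(g_1^{-1}g)=g_1\,\Phi(f)(g_1^{-1}g)=(g_1\cdot\Phi(f))(g).$$

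For the necessity direction, observe that the very formula $\Phi(f)(g)=gf(g)$ only makes sense if the value $f(g)\in W$ can be acted on by $g\in G$; the only natural way to interpret this is that $W$ carries an $H$-equivariant linear map $\iota:W\to V$ and $\Phi(f)(g)$ means $g\cdot\iota(f(g))$. Evaluating at $g=e$, one sees that the composition of $\Phi$ with evaluation at $e$ equals $\iota\circ\mathrm{ev}_e$; since $\mathrm{ev}_e$ on either side is surjective (one can build, for each $w\in W$ or $v\in V$, a function taking that value at $e$ by extending via the $H$-equivariance rule on the identity coset, then spreading through $G$ by a partition-of-unity-type argument using translates), bijectivity of $\Phi$ forces $\iota$ to be a bijective $H$-intertwiner $W\xrightarrow{\sim}V|_H$. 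Identifying $W$ with $V|_H$ via $\iota$ yields the stated conclusion.

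The main obstacle I anticipate is the necessity direction: the lemma's phrasing is slightly loose about in what sense $W$ could fail to be $V|_H$, so a careful proof needs to pin down that the ``$gf(g)$'' notation implicitly requires an $H$-equivariant map $W\to V$, and then argue that the required bijection of $G$-representations forces this map to be an isomorphism. The sufficiency direction is entirely routine and reduces to the four one-line identities above.
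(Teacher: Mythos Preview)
Your sufficiency direction is correct and essentially identical to the paper's argument.

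For the necessity direction, your framing via an implicit $H$-intertwiner $\iota:W\to V$ is actually more careful than the paper's (which tacitly treats $W$ and $V$ as overlapping sets), but two points need attention. First, the partition-of-unity argument is unnecessary and even misplaced: the paper imposes no regularity on elements of $\mapthree{G}{H}{W}$, so one may simply choose a set-theoretic section $x\mapsto g_x$ of $G\to G/H$ and define $f_w(g_xh)=h^{-1}w$ outright; this is exactly what the paper does. Second, your claim that ``bijectivity of $\Phi$ forces $\iota$ to be a bijective $H$-intertwiner'' is not fully justified by the diagram $\mathrm{ev}_e\circ\Phi=\iota\circ\mathrm{ev}_e$ together with surjectivity of the two evaluation maps. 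That diagram yields surjectivity of $\iota$ immediately, but not injectivity: from $\iota(w)=0$ you only get $\Phi(f)(e)=0$ for some $f$ with $f(e)=w$, not $\Phi(f)=0$. The fix is to use the explicit functions $f_w$ above: if $\iota(w)=0$ then $\Phi(f_w)(g_xh)=g_x\,\iota(w)=0$ for all $g_xh$, so $\Phi(f_w)=0$, hence $f_w=0$ by injectivity of $\Phi$, hence $w=f_w(e)=0$. The paper's version packages the same idea as the two inclusions $W\subset V$ and $V\subset W$, then checks the $H$-actions agree by applying the equivariance condition to $g\mapsto g^{-1}v$.
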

\begin{proof}Let us denote the restriction of $V$ to $H$ by $\Res{G}{H}{V}$.
Suppose that $W=\Res{G}{H}{V}$. Then $\Phi(f)\in \mapthree{G}{H}{\field}\otimes V$ whenever $f\in\mapthree{G}{H}{W}$ since $\Phi(f)(gh)=ghf(gh)=gf(g)=\Phi(f)(g)$, so that $\Phi$ is well defined. It has an inverse given by $\Phi^{-1}(\tilde{f})(g)=g^{-1}\tilde{f}(g)$ and the $G$-linearity follows from the computation 
\begin{align*}
\Phi(g_1\cdot f)(g)&=\Phi(f\circ g_1^{-1})(g)
\\&=gf(g_1^{-1} g)
\\&=g_1g_1^{-1}gf(g_1^{-1} g)
\\&=g_1\Phi(f)(g_1^{-1} g)
\\&=(g_1\cdot\Phi(f))(g),
\end{align*}
which completes the proof of one direction.

Suppose now that $\Phi$ is an isomorphism of $G$-modules. We show first that $W=V$ as vector spaces. Let $G=\sqcup_{x\in \hs}g_x H$ and for each $w\in W$ define the map $g_x h\mapsto h^{-1} w$, which is an element of $\mapthree{G}{H}{W}$. Therefore $\Phi$ sends it to an element $g_x h\mapsto g_x w \in V$. Taking $g_x=1$ we find that $W\subset V$.
In the other direction we have an easier construction because for each $v\in V$ the constant map $g\mapsto v$ is in $\mapthree{G}{H}{\field}\otimes V$. The inverse of $\Phi$ sends this map to the map $g\mapsto g^{-1}v$ in $\mapthree{G}{H}{W}$. With $g=1$ we see that $V\subset W$.

To see that the action of $H$ on $W$ is such that $W= \Res{G}{H}{V}$ we write out the equivariance condition for $g\mapsto f(g)=g^{-1}v$ in $\mapthree{G}{H}{W}$ while we keep track of the actions we are using with subscripts:
$$h\cdot_W (g^{-1}\cdot_Vv)=h\cdot_{W}f(g)=f(gh^{-1})=(gh^{-1})^{-1}\cdot_Vv=h\cdot_V (g^{-1}\cdot_Vv).$$
Putting $g=1$ we get $h\cdot_W v=h\cdot_V v$ and we conclude that $W=\Res{G}{H}{V}$.
\end{proof}

Lemma \ref{lem:aa1} to follow is a special instance of geometric Frobenius reciprocity.
It can be found in the excellent but challenging book by \u{C}ap and Slov\'{a}k, \cite[Theorem 1.4.4]{cap2009parabolic}\footnote{In \cite[Theorem 1.4.4]{cap2009parabolic}, the notation of sections $\Gamma(E)$ of a homogeneous bundle $E\to G/H$ is used in place of $\mapthree{G}{H}{W}$, and the fiber $E_o$ at the `base point' $o=H$ is used in place of $W$.}, where they use it to understand $G$-invariant geometric structures on $G/H$.
\begin{Lemma}
\label{lem:aa1}
The space $\mapthree{G}{H}{W}^G$ corresponds to the space of constant maps $G\to W^H$. This provides an isomorphism $\mapthree{G}{H}{W}^G\cong W^H$.
\end{Lemma}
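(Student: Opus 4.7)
The plan is to unpack the two defining conditions on a function $f\in\mapthree{G}{H}{W}^G$ and observe that together they force $f$ to be a constant map into $W^H$. First I would use the $G$-invariance to show constancy: unwrapping the definition of the $G$-action, $G$-invariance of $f$ means $f(g_1^{-1}g)=f(g)$ for all $g,g_1\in G$, which is the same as saying $f$ takes the same value on every element of $G$. Call this value $w\in W$.

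Next I would feed this constant function into the defining equivariance relation $f(gh)=h^{-1}f(g)$ for $\mapthree{G}{H}{W}$. Since $f(gh)=w=f(g)$, the relation reduces to $w=h^{-1}w$ for every $h\in H$, i.e.\ $w\in W^H$. Conversely, for any $w\in W^H$ the constant map $g\mapsto w$ trivially satisfies both $f(gh)=h^{-1}f(g)$ (because $hw=w$) and the $G$-invariance condition, so it lies in $\mapthree{G}{H}{W}^G$.

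Thus the evaluation map $\mapthree{G}{H}{W}^G\to W^H$ sending $f$ to $f(1)$ (equivalently, sending $f$ to its unique value) is well defined, and the assignment $w\mapsto (g\mapsto w)$ is an explicit inverse. Both maps are manifestly $\field$-linear, so they constitute a linear isomorphism $\mapthree{G}{H}{W}^G\cong W^H$.

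I do not expect any genuine obstacle here: the argument is a direct chase through the definitions, and the only subtlety worth flagging is making sure to distinguish the $G$-action on $\mapthree{G}{H}{W}$ (by left translation of the argument) from the $H$-equivariance built into the definition of $\mapthree{G}{H}{W}$ itself. Once these are kept separate, the two conditions decouple cleanly into "constant" and "value fixed by $H$".
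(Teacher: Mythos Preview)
Your argument is correct and matches the paper's proof essentially line for line: both use $G$-invariance to force $f$ to be constant (the paper writes $f(g)=g\cdot f(g)=f(1)$, which is exactly your observation), then feed the constant into the $H$-equivariance relation to land in $W^H$, and note the converse is immediate. There is nothing to add.
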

\begin{proof}
If $f\in \mapthree{G}{H}{W}^G$ then $f(g)=g\cdot f(g)=f(1)$ hence $f$ is constant. If $f(g)=w$ then $hw=hf(g)=f(gh^{-1})=w$ hence $w\in W^H$ and $f$ is a constant map $G\to W^H$. 
Vice versa, any constant map $G\to W^H$ is clearly contained in $\mapthree{G}{H}{W}^G$.
\end{proof}

We will use the isomorphisms $\mapthree{G}{H}{\field}\cong \mapone{G/H}$ and $\mapthree{G}{H}{\field}\otimes V\cong \maptwo{G/H}{V}$ and the resulting isomorphism
$$(\mapthree{G}{H}{\field}\otimes V)^G\cong \emap{G}{\hs}{V},$$
which is nothing more than a change of perspective. To see this, notice that $\mapthree{G}{H}{\field}$ are the functions on $G$ which are constant on $H$-cosets. These correspond to the function on $G/H$ and are denoted $\mapone{G/H}$. Likewise, elements of $\mapthree{G}{H}{\field}\otimes V$ can be considered as maps $G\to V$ invariant on $H$-cosets, corresponding to maps $G/H\to V$ and denoted $\maptwo{G/H}{V}$. When we recall how $G$ acts on $\mapthree{G}{H}{\field}\otimes V$ we see that $G$-invariant elements of this space correspond to $G$-equivariant elements of $\maptwo{G/H}{V}$.

Using this reformulation, we can combine Lemmas \ref{lem:homogeneous vb to trivial vb} and \ref{lem:aa1} into the following theorem.
\begin{Theorem}
\label{thm:aa2}
Let $V$ be a representation of $G$.
The space of equivariant maps $\emap{G}{G/H}{V}$ corresponds to the algebra of maps $gH\mapsto gv$ where $v\in V^H$. This provides an isomorphism $\emap{G}{G/H}{V}\cong V^H$.
\end{Theorem}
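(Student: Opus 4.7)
The plan is to obtain Theorem \ref{thm:aa2} as an immediate consequence of Lemmas \ref{lem:homogeneous vb to trivial vb} and \ref{lem:aa1}, combined with the identification $(\mapthree{G}{H}{\field}\otimes V)^G\cong \emap{G}{\hs}{V}$ discussed in the paragraph preceding the theorem. The strategy is to string together three isomorphisms and then track where a given $v\in V^H$ is sent at each step, so as to produce the explicit formula $gH\mapsto gv$.

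First I would take $W=\Res{G}{H}{V}$ in Lemma \ref{lem:homogeneous vb to trivial vb}; then $W^H=V^H$ and the lemma yields an isomorphism $\Phi:\mapthree{G}{H}{W}\to\mapthree{G}{H}{\field}\otimes V$ of $G$-representations. Passing to $G$-invariants, I get
\[
\mapthree{G}{H}{W}^G \cong (\mapthree{G}{H}{\field}\otimes V)^G.
\]
Next I apply Lemma \ref{lem:aa1}, which identifies the left hand side with $W^H=V^H$, and use the change-of-perspective isomorphism $(\mapthree{G}{H}{\field}\otimes V)^G\cong \emap{G}{G/H}{V}$ on the right hand side. Composing these gives the desired isomorphism $\emap{G}{G/H}{V}\cong V^H$.

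For the explicit description, I would start with $v\in V^H$: Lemma \ref{lem:aa1} sends it to the constant map $g\mapsto v$ in $\mapthree{G}{H}{W}^G$, then $\Phi$ sends this to the map $g\mapsto gv$ in $(\mapthree{G}{H}{\field}\otimes V)^G$, which descends to the map $gH\mapsto gv$ as an element of $\emap{G}{G/H}{V}$. The only thing to check is well-definedness on cosets, namely that $ghv=gv$ for all $h\in H$; this is immediate from $v\in V^H$.

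I do not expect any real obstacle: the work has already been done in the two lemmas and in the intermediate discussion, and the theorem is essentially a packaging statement. The only mild care is to confirm that the resulting map $gH\mapsto gv$ is indeed $G$-equivariant, which follows from $u\cdot(gH\mapsto gv)$ sending $gH$ to $u(u^{-1}g)v=gv$, matching the original map.
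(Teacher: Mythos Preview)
Your proposal is correct and follows essentially the same approach as the paper: combine Lemma~\ref{lem:aa1} with the $G$-invariants of the isomorphism $\Phi$ from Lemma~\ref{lem:homogeneous vb to trivial vb}, together with the identification $(\mapthree{G}{H}{\field}\otimes V)^G\cong \emap{G}{G/H}{V}$, and track a given $v\in V^H$ through to the map $gH\mapsto gv$. The paper's proof is a one-line version of exactly this composition; your added checks of well-definedness and equivariance are correct but already implicit in the fact that $\Phi$ is a $G$-isomorphism.
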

\begin{proof}
We map the constant maps $G\to V^H$ that constitute $\mapthree{G}{H}{W}^G$ by Lemma \ref{lem:aa1} onto $\emap{G}{G/H}{V}$ using the isomorphism $\Phi$ of Lemma \ref{lem:homogeneous vb to trivial vb}.
\end{proof}
Theorem \ref{thm:aa2} reduces the infinite dimensional problem of determining the invariants in $\maptwo{G/H}{V}$ to the finite dimensional problem to determine $V^H$. It finishes the proof of our method to compute equivariant convolutional kernels.

We apply the method to a second common example.
\begin{Example}[The sphere $\sphsp{2}$]
The sphere is compact and the equivariant maps on the sphere are therefore described in the existing literature of geometric deep learning (cf.~\cite{weiler20183d}, \cite[Appendix E.4]{lang2020wigner} and \cite[Section 6]{gerken2023geometric}). This makes it a good example to illustrate our alternative approach to this problem.

For convenience we will work with the homogeneous space $$\overline{\mb{C}}\cong \PSU(2)/\PSU(1)$$ where $\PSU(2)=\SU(2)/\pm\Id$ acts on the Riemann sphere $\overline{\mb{C}}=\mb C \cup\{\infty\}$ by M{\"o}bius transformations, rather than the more obvious $\sphsp{2}\cong\SO(3)/\SO(2)$. Stereographic projection $\sphsp{2}\to\overline{\mb C}$ takes one description to the other.

The complex irreducible representations (irreps) of $\SU(2)$ can be realised as 
$$V_n=\mb C\langle x^n,x^{n-1}y, x^{n-2}y^2,\ldots,y^n\rangle$$
where an element of $g\in \SU(2)$ acts on a polynomial $p\in V_n$ by $(g\cdot p)(x,y)=p(g^{-1}(x,y))$. 
If $p\in V_{2m}$ then $p(-x,-y)=p(x,y)$ so that  $V_{2m}$ is a representation of $\SU(2)/\pm\Id\cong \SO(3)$. All irreps of $SO(3)$ are obtained this way.
Any complex representation of a compact group such as $\SU(2)$ and $\SO(3)$ is a direct sum of irreps.
See for instance \cite{sepanski2007compact} or another textbook on this topic.

We carry out our elementary method to compute $\emap{\PSU(2)}{\overline{\mb C}}{V_{2m}}$.
As base point we choose $x_0\coloneqq 0\in\overline{\mb C}$. The subgroup of $\PSU(2)$ stabilising $0$ is $$H=\left\{\pm\begin{pmatrix}a&0\\0&\overline{a}\end{pmatrix}\,\vert \,a\in\mb C, \vert a\vert =1\right\}.$$
We need a map $f:\overline{\mb C}\to\PSU(2)$ such that $f(Z)$ sends $0$ to $Z$. For instance, we can define\footnote{The map $f$ is not continuous at $Z=\infty$ but this poses no problem. A continuous solution does not exist because $\PSU(2)$ and $\overline{\mb C}\times\PSU(1)$ are topologically inequivalent.}
$$
f(Z)=
\left\{
\begin{array}{ll}
\pm\frac{1}{\sqrt{1+\vert Z\vert ^2}}\begin{pmatrix}1&Z\\-\overline{Z}&1\end{pmatrix}&\text{ if }Z\ne \infty,
\\[3mm]
\pm\begin{pmatrix}0&-1\\1&0\end{pmatrix}&\text{ if }Z=\infty.
\end{array}
\right.
$$
Our next step is to determine $V^H$. In this case we quickly find that 
$V_{2m}^H=\mb C x^m y^m$. Thus we find the solution: 
$\emap{\SU(2)}{\overline{\mb C}}{V_{2m}}$ is the one-dimensional space spanned by the map $Z\mapsto f(Z)x^my^m$ where
$$
f(Z)x^my^m=
\left\{
\begin{array}{ll}
\frac{(x-Zy)^m(\overline{Z}x+y)^m}{1+\vert Z\vert ^2}&\text{ if }Z\ne \infty,
\\[3mm]
(-1)^mx^my^m&\text{ if }Z=\infty.
\end{array}
\right.
$$

It was explained in \cite{weiler20183d,lang2020wigner} that this problem can be solved using knowledge of spherical harmonics (see e.g. \cite{toth2002finite}). With our approach we do not need to know bases of spherical harmonics a priori. Such bases appear automatically in the computation, because the coefficients of an equivariant vector form a basis for the spherical harmonics. To find these coefficients we rewrite
\begin{align*}
\frac{(x-Zy)^m(\overline{Z}x+y)^m}{1+\vert Z\vert ^2}
&=
\frac{1}{1+\vert Z\vert ^2}
\Big(\sum_{i=0}^m \binom{m}{i} x^{m-i}(-Zy)^{i}\Big)
\Big(\sum_{j=0}^m \binom{m}{j}(\overline{Z}x)^{m-j}y^j\Big)
\\&=
\frac{1}{1+\vert Z\vert ^2}
\sum_{i=0}^m \sum_{j=0}^m\binom{m}{i}\binom{m}{j} x^{2m-i-j} y^{i+j}
(-Z)^{i}\overline{Z}^{m-j}
\\&=
\sum_{\ell=0}^{2m}  x^{2m-\ell} y^{\ell}
\left[\frac{1}{1+\vert Z\vert ^2}
\sum_{i+j=\ell}\binom{m}{i}\binom{m}{j}
(-1)^i Z^{i}\overline{Z}^{m-j}
\right]
\end{align*}
and find in square brackets a basis for the spherical harmonics of degree $m$.
\end{Example}

We end this section with the important specialisation to tangent vector fields. 
The tangent bundle of a homogeneous space can be studied in the framework of homogeneous vector bundles, using the isomorphism  
\beq{eq:tangent bundle}
T(G/H)\cong G\times_H \mf g/\mf h
\eeq
of vector bundles over $G/H$ described by Sharpe in \cite[Chapter 4, Proposition 5.1]{sharpe1997differential}. The symbols $\mf g$ and $\mf h$ denote the Lie algebras of the groups $G$ and $H$ respectively. 
Combining Lemma \ref{lem:aa1} and (\ref{eq:tangent bundle}) we obtain
\begin{Corollary}
\label{cor:tangent vector fields}
The space of $G$-invariant tangent vector fields on $\hs$ is isomorphic to $(\mf g/\mf h)^H$. If $H$ is connected then one can write $(\mf g/\mf h)^H$ in purely Lie algebraic terms as $N_{\mf g}(\mf h)/\mf h$ where $N_{\mf g}(\mf h)$ is the normaliser $\{A\in\mf g\,\vert \,[A,B]\in\mf h\, \forall B\in\mf h\}$ of $\mf h$ in $\mf g$.
\end{Corollary}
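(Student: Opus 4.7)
The plan is to derive the first statement directly from the machinery already set up, and to treat the second statement as a standard translation between Lie group invariance and Lie algebra invariance via the exponential map.

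First, by definition, a $G$-invariant tangent vector field on $\hs=G/H$ is a $G$-invariant section of $T(G/H)$. Applying the isomorphism (\ref{eq:tangent bundle}) of $G$-equivariant vector bundles, such sections are in $G$-equivariant bijection with sections of $G\times_H \mf g/\mf h$, where $H$ acts on $\mf g/\mf h$ via the adjoint representation (which descends to the quotient because $\mf h$ is stable under $\Ad(H)$). Using the identification of sections of $G\times_H W$ with $\mapthree{G}{H}{W}$ recalled before Lemma \ref{lem:aa1} and applying Lemma \ref{lem:aa1} with $W=\mf g/\mf h$, I obtain the desired isomorphism with $(\mf g/\mf h)^H$. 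This gives the first claim with essentially no work beyond invoking the results already established.

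For the second statement, I spell out the condition for $A+\mf h\in (\mf g/\mf h)^H$: it means $\Ad(h)A-A\in\mf h$ for every $h\in H$. Since $H$ is connected, it is generated by $\exp(\mf h)$, so it suffices to test this condition on one-parameter subgroups. Differentiating $\Ad(\exp(tB))A-A\in\mf h$ at $t=0$ gives $[B,A]\in\mf h$ for all $B\in\mf h$, which is precisely the membership $A\in N_{\mf g}(\mf h)$. Conversely, if $A\in N_{\mf g}(\mf h)$, then $\ad(B)$ preserves $\mb R A+\mf h$ (modulo $\mf h$ it acts as zero on $A$), so the series expansion
\[
\Ad(\exp(tB))A=\sum_{k=0}^\infty \frac{t^k}{k!}\ad(B)^k A
\]
has all $k\geq 1$ terms in $\mf h$, giving $\Ad(\exp(tB))A-A\in\mf h$. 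Products of such exponentials exhaust a neighbourhood of the identity, and by connectedness all of $H$, so $A+\mf h$ is $H$-invariant. This establishes $(\mf g/\mf h)^H = N_{\mf g}(\mf h)/\mf h$.

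The only mildly delicate point is the translation from infinitesimal to global invariance, but this is the standard fact that for a connected Lie group $H$ acting linearly on a finite-dimensional space, the subspace of $H$-fixed vectors coincides with the subspace annihilated by $\mf h$. No step here looks like a serious obstacle.
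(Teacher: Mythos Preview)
Your argument is correct and follows exactly the route indicated in the paper: the first claim is obtained by combining the isomorphism (\ref{eq:tangent bundle}) with Lemma~\ref{lem:aa1}, and your treatment of the second claim supplies the standard passage from $H$-invariance to $\mf h$-invariance via connectedness, which the paper states without proof.
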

When $H$ is the trivial group, Corollary \ref{cor:tangent vector fields} reduces to the well known fact that the space of left-invariant vector fields on a Lie group is isomorphic to its Lie algebra.

\section{Fixed point subalgebras of homogeneous algebra bundles}
\label{sec:algebra}
In this section we investigate what happens when the fibres of the homogeneous vector bundles have the structure of an algebra.

Throughout this section we assume that $G$ is a Lie group and $H$ a closed subgroup, and that $\ta$ is an $\field$-algebra and $H$-representation and these two structures are compatible in the sense that $h(aa_1)=(ha)(ha_1)$ for all $h\in H$ and $a,a_1\in\ta$.
\begin{Lemma}
$\mapthree{G}{H}{\ta}$ has an algebra structure defined by $(ff_1)(g)=f(g)f_1(g)$, where $f,f_1\in \mapthree{G}{H}{\ta}$ and $g\in G$. The invariants $\mapthree{G}{H}{\ta}^G$ form a subalgebra.
\end{Lemma}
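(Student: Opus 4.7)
The plan is a direct three-step verification: check that the pointwise product actually lands in $\mapthree{G}{H}{\ta}$, check that the algebra axioms are inherited, and then check that $G$-invariance is preserved.

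First I would verify well-definedness: given $f,f_1\in\mapthree{G}{H}{\ta}$ and $h\in H$, $g\in G$, compute
\begin{align*}
(ff_1)(gh)&=f(gh)f_1(gh)=(h^{-1}f(g))(h^{-1}f_1(g))\\
&=h^{-1}(f(g)f_1(g))=h^{-1}(ff_1)(g),
\end{align*}
where the middle equality uses the compatibility hypothesis $h(aa_1)=(ha)(ha_1)$ applied to $h^{-1}$. This shows $ff_1\in\mapthree{G}{H}{\ta}$, and the key (indeed only) nontrivial input is the compatibility between the $H$-action and the multiplication in $\ta$.

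Next, since the product is defined pointwise and $\ta$ is an $\field$-algebra at each point, associativity, bilinearity over $\field$, and (if present) unitality of multiplication on $\mapthree{G}{H}{\ta}$ follow immediately from the corresponding identities in $\ta$ evaluated at each $g\in G$. No further argument is required here.

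Finally, to see that $\mapthree{G}{H}{\ta}^G$ is a subalgebra, recall from Section \ref{sec:vector} that the $G$-action is $(g_1\cdot f)(g)=f(g_1^{-1}g)$, which does not involve the multiplication in $\ta$ at all. For $f,f_1\in\mapthree{G}{H}{\ta}^G$ and $g_1\in G$,
\begin{align*}
(g_1\cdot(ff_1))(g)&=(ff_1)(g_1^{-1}g)=f(g_1^{-1}g)\,f_1(g_1^{-1}g)\\
&=f(g)\,f_1(g)=(ff_1)(g),
\end{align*}
so $ff_1$ is again $G$-invariant. Linearity of invariants is automatic, and this completes the proof.

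There is no real obstacle: the statement is a bookkeeping lemma whose entire content is the observation that the compatibility axiom $h(aa_1)=(ha)(ha_1)$ is exactly what is needed for the first step, while the $G$-action on $\mapthree{G}{H}{\ta}$ acts purely by reparametrising the domain and hence is automatically compatible with any pointwise-defined operation on the codomain.
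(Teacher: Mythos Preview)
Your proof is correct; the paper itself omits the proof entirely, treating the lemma as elementary, so your verification is exactly the routine check the authors left to the reader.
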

We will take the algebra structure of $\mapthree{G}{H}{\ta}$ into account and write $\ma{G}{H}{\ta}$ to emphasize this. A subalgebra of invariants will be called an automorphic algebra. The automorphic algebra $\mapthree{G}{H}{\ta}^G$ will be denoted $\ema{G}{H}{\ta}$.

The linear isomorphisms obtained in the previous section are algebra isomorphisms in the setting of the current section. We state the results but omit the elementary proofs.
\begin{Theorem}
\label{thm:aa3} In the context of invariant sections of homogeneous bundles and equivariant maps we have  respectively the following results.
\begin{enumerate}[(a)]
\item
The automorphic algebra $\ema{G}{H}{\ta}$ corresponds to the algebra of constant maps $G\to \ta^H$. 
\item
If $G$ acts on $\ta$ by algebra isomorphisms, then the automorphic algebra $\aaa{G}{G/H}{\ta}$ corresponds to the algebra of maps $gH\mapsto ga$ where $a\in\ta^H$. 
\end{enumerate}
This provides isomorphisms $\ema{G}{H}{\ta} \cong \aaa{G}{G/H}{\ta}\cong \ta^H$ of algebras.

\end{Theorem}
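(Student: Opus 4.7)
The plan is to check that the two linear isomorphisms underlying Section \ref{sec:vector} are compatible with the pointwise algebra structure on $\ma{G}{H}{\ta}$. Before doing so, I would record the small preliminary observation that $\ta^H$ is itself a subalgebra of $\ta$: the compatibility hypothesis $h(aa_1)=(ha)(ha_1)$ immediately shows that $H$-invariance is preserved by multiplication. This ensures the right-hand side of the claimed isomorphism makes sense as an algebra.

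For part (a), I would appeal to Lemma \ref{lem:aa1}, which identifies $\mapthree{G}{H}{\ta}^G$ with the constant maps $G \to \ta^H$. Since multiplication on $\ma{G}{H}{\ta}$ is pointwise, its restriction to constant maps mirrors multiplication in $\ta^H$ exactly. Consequently the linear bijection sending a constant map to its value is automatically a ring homomorphism, and no further work is required to upgrade it to an algebra isomorphism $\ema{G}{H}{\ta}\cong\ta^H$.

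For part (b), the task reduces to showing that the map $\Phi$ of Lemma \ref{lem:homogeneous vb to trivial vb} respects multiplication once $G$ acts on $\ta$ by algebra isomorphisms. The key computation I would record is
\[
\Phi(ff_1)(g) = g\bigl(f(g)f_1(g)\bigr) = \bigl(gf(g)\bigr)\bigl(gf_1(g)\bigr) = \Phi(f)(g)\,\Phi(f_1)(g),
\]
where the middle equality is precisely the algebra-isomorphism hypothesis on the $G$-action. Without that hypothesis, $\Phi$ would only be linear, so this is the single point in the argument where the extra assumption is genuinely used. The remaining step, passing from $\mapthree{G}{H}{\field}\otimes\ta$ to $\maptwo{G/H}{\ta}$, is a change of perspective (functions on $G$ constant on $H$-cosets versus functions on $G/H$) that trivially preserves pointwise multiplication. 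Composing with the identification of Theorem \ref{thm:aa2} then yields the algebra isomorphism $\aaa{G}{G/H}{\ta}\cong\ta^H$ realised explicitly by $a\mapsto (gH\mapsto ga)$.

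There is no serious obstacle here: the whole argument is bookkeeping on top of the linear results already established. The only conceptual point worth emphasising is the asymmetric role of the hypothesis — part (a) needs only that $H$ acts on $\ta$ by algebra automorphisms (so that $\ta^H$ is closed under multiplication), while part (b) requires the larger group $G$ to act by algebra isomorphisms so that the transport map $\Phi$ is multiplicative.
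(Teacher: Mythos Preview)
Your proposal is correct and follows precisely the approach the paper indicates: the paper states just before Theorem \ref{thm:aa3} that ``the linear isomorphisms obtained in the previous section are algebra isomorphisms in the setting of the current section'' and explicitly omits the elementary proofs, which are exactly the verifications you supply. Your observation about the asymmetric role of the hypotheses --- that part (a) needs only the $H$-action to be by automorphisms while part (b) genuinely requires the full $G$-action to be so --- is a nice clarification that the paper leaves implicit.
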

\begin{Remark}
The isomorphism class of $\ema{G}{H}{\ta}$ does not depend on $G$. The full group only plays a role in the realisation of the algebra.
\end{Remark}

\begin{Example}[Automorphic Lie algebras on the Riemann sphere]
\label{ex:ls}
Lombardo and Sanders classified automorphic Lie algebras on the Riemann sphere with target algebra $\slnc[2]$ \cite{lombardo2010on} (elements of these Lie algebras are invariants of a finite group). Their analysis starts with the observation that the map $$\mb C^2\to \slnc[2],\quad \begin{pmatrix}x\\y\end{pmatrix}\mapsto\begin{pmatrix}xy&-x^2\\y^2&-xy\end{pmatrix}$$ is $\SLNC[2]$-equivariant with respect to left multiplication on $\mb{C}^2$ and the adjoint action (conjugation) on $\slnc[2]$.

We recover this observation with the approach of this paper, where we consider $\mb{C}^2\setminus \{0\}$ as  $\SLNC[2]$-space. The stabiliser of $x_0\coloneqq \begin{pmatrix}1\\0\end{pmatrix}$ is 
$$A=\left\{\begin{pmatrix}1&b\\0&1\end{pmatrix}\;\vert \;b\in\mb{C}\right\}=e^{\mb{C}\begin{pmatrix}0&1\\0&0\end{pmatrix}}.$$
A map $f:\mb{C}^2\setminus \{0\}\cong \SLNC[2]$ with $f(x)x_0=x$ is given by 
$$\begin{pmatrix}x\\y\end{pmatrix}\mapsto\begin{pmatrix}x&-y/r^2\\y&x/r^2\end{pmatrix},\quad r^2=x^2+y^2.$$ 
and provides an isomorphism $\mb{C}^2\setminus \{0\}\cong \SLNC[2]/A$.

The space of $A$-invariants in $\slnc[2]$ is spanned by $\begin{pmatrix}0&1\\0&0\end{pmatrix}$. Hence, by Theorem \ref{thm:aa3}, the space of  $\SLNC[2]$-equivariant maps $\mb{C}^2\setminus\{0\}\to\slnc[2]$ is spanned by 
$$\begin{pmatrix}x\\y\end{pmatrix}\mapsto\Ad\left(\begin{pmatrix}x&-y/r^2\\y&x/r^2\end{pmatrix}\right)\begin{pmatrix}0&1\\0&0\end{pmatrix}=\begin{pmatrix}-xy&x^2\\-y^2&xy\end{pmatrix}.$$

Theorem \ref{thm:aa3} is not necessary to compute this example. For example, if $V_n$ is the irreducible representation of $\SLNC[2]$ with highest weight $n$, then we know from representation theory that we have the isomorphisms of $\SLNC[2]$-modules
$$\End(V_n)\cong V_n\otimes V_n^\ast\cong V_n\otimes V_n\cong V_{2n}\oplus V_{2n-2}\oplus\ldots\oplus V_0.$$ With Schur's lemma we can then see that there is precisely one equivariant map $\mb{C}^2\setminus\{0\}\to\mf{gl}(V_n)$ of homogeneous order $0, 2, 4, \ldots, 2n$, up to multiplicative constant, and these can be computed by brute force. It is however difficult to say anything about the algebra structure of this space based only on the representation theory of $\SLNC[2]$. Here we find the value of Theorem \ref{thm:aa3} for this example, as it shows that the solution space is conjugate to the space of $A$-invariants.

If we define the nilpotent operator $E$ for a representation $\rho_n:\SLNC[2]\to\GL(V_n)$ by $$E=\rd\rho_n\begin{pmatrix}0&1\\0&0\end{pmatrix}$$ then any power of $E$ is fixed by $A$ (because $\rho_n(A)=e^{ \mb C E}$ and $\Ad(e^{ c E})E^n=e^{ c\, \ad(E)}E^n=E^n$). Moreover, the minimal polynomial of $E$ is $\lambda^{n+1}$, thus 
$$\mf{gl}(V_n)^A=\mb{C}[E]\cong \mb{C}[T]/(T^{n+1}),$$ and we have an isomorphism $$\mb{C}[T]/(T^{n+1})\to \aaa{\SLNC[2]}{\mb{C}^2\setminus\{0\}}{\mf{gl}(V_n)}$$ of algebras
given by
$$ T\mapsto \left[\begin{pmatrix}x\\y\end{pmatrix}\mapsto \Ad\left(\rho_n\begin{pmatrix}x&-y/r^2\\y&x/r^2\end{pmatrix}\right) E\right].$$
This cyclic algebra is of course a subalgebra of any automorphic Lie algebra defined by restricting the above actions of $\SLNC[2]$ to one of its subgroups.

If we replace $V_n$ by a reducible representation $U$ then the algebra $\mf{gl}(U)^A$ is no longer abelian.
\end{Example}

In the context of convolutional networks, the target representations are spaces of real matrices. 
In the context of automorphic Lie algebras we are interested in target spaces with algebra structures. For this reason we will emphasize the intersection of these special cases, where $W=\ta=\End(U)$. When we involve the algebra structure, we will write $\mf{gl}(U)$ instead of $\End(U)$.

Due to the Skolem-Noether theorem we know that any automorphism of $\mf{gl}(U)$ is inner, i.e. of the form $m\mapsto r m r^{-1}$ for some $r\in\GL(U)$. Hence any homomorphism $\rho:H\to\Aut{\mf{gl}(U)}$ is of the form $\rho(h)m=\sigma(h)m\sigma(h)^{-1}$ where $\sigma$ is a homomorphism $H\to\PGL(U)$, that is, a projective representation of $H$.

Recall that the endomorphism ring of an irrep is a division ring due to Schur's lemma, and that a real division ring is isomorphic to either the real numbers $\mb R$, the complex numbers $\mb{C}$ or the quaternions $\mb H$ (this statement is known as the Frobenius theorem). Real irreps are said to be of real type, complex type or quaternionic type accordingly. Combining these facts with the complete reducibility of representations of compact groups we obtain the following corollary of Lemma \ref{lem:aa1}.
\begin{Theorem}
\label{thm:endomorphisms}Let $K$ be a compact subgroup of $G$ and let $U$ be a real projective representation of $K$. Then there is an isomorphism of algebras $$\ema{G}{K}{\mf{gl}(U)}\cong \bigoplus_{r}\mf{gl}(r,\mb R)\oplus \bigoplus_{c}\mf{gl}(c,\mb C)\oplus \bigoplus_{h}\mf{gl}(h,\mb H)$$ where $r$ ranges over the multiplicities of projective $K$-irreps in $U_\mb{C}$ of real type, $c$ ranges over the multiplicities of conjugate pairs of projective $K$-irreps in $U_\mb{C}$ of complex type, and $h$ ranges over the multiplicities of pairs of projective $K$-irreps in $U_\mb{C}$ of quaternionic (or symplectic) type.
\end{Theorem}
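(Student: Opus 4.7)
The plan is to reduce the statement to a finite dimensional computation via Lemma \ref{lem:aa1} and then to identify the fixed subalgebra $\mf{gl}(U)^K$ using complete reducibility together with Schur's lemma and the Frobenius theorem.

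First I would invoke Lemma \ref{lem:aa1} with $H=K$ and $W=\mf{gl}(U)$, which identifies $\ema{G}{K}{\mf{gl}(U)}$ with the constant maps $G\to\mf{gl}(U)^K$ and yields an algebra isomorphism $\ema{G}{K}{\mf{gl}(U)}\cong\mf{gl}(U)^K$. The action of $K$ on $\mf{gl}(U)$ is the conjugation action: by the discussion preceding the theorem, any $\rho:K\to\Aut{\mf{gl}(U)}$ comes from a projective representation $\sigma:K\to\PGL(U)$, and conjugation $m\mapsto \sigma(k)m\sigma(k)^{-1}$ is well defined independent of the choice of lift.

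Next I would handle the projective nature of $\sigma$ by choosing a compact central extension $1\to Z\to\tilde K\to K\to 1$ together with a genuine lift $\tilde\sigma:\tilde K\to\GL(U)$ of $\sigma$. Because the central subgroup $Z$ acts on $U$ by scalars, its action on $\mf{gl}(U)$ by conjugation is trivial, so the fixed points satisfy
\[
\mf{gl}(U)^K = \mf{gl}(U)^{\tilde K} = \End_{\tilde K}(U),
\]
where the right hand side denotes the algebra of $\tilde K$-equivariant endomorphisms. I would then apply complete reducibility of real representations of the compact group $\tilde K$ to write $U=\bigoplus_{\alpha} U_{\alpha}^{\oplus m_\alpha}$ where $U_\alpha$ are pairwise inequivalent real $\tilde K$-irreducibles (equivalently, projective $K$-irreducibles). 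By Schur's lemma, $\End_{\tilde K}(U_\alpha)$ is a finite dimensional real division algebra, which by Frobenius is one of $\mb R$, $\mb C$ or $\mb H$ according to the type of $U_\alpha$, and there are no intertwiners between distinct isotypic components. This gives
\[
\End_{\tilde K}(U)\;\cong\;\bigoplus_{\alpha}\mf{gl}\bigl(m_\alpha,\,\End_{\tilde K}(U_\alpha)\bigr)
\;=\; \bigoplus_{\text{real }\alpha}\mf{gl}(m_\alpha,\mb R)\oplus\bigoplus_{\text{cplx }\alpha}\mf{gl}(m_\alpha,\mb C)\oplus\bigoplus_{\text{quat }\alpha}\mf{gl}(m_\alpha,\mb H).
\]

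Finally I would translate the real multiplicities $m_\alpha$ into multiplicities of complex projective $K$-irreducibles in the complexification $U_{\mb C}$, since the statement of the theorem is phrased in the latter terms. For a real irreducible $U_\alpha$ of real type one has $(U_\alpha)_{\mb C}$ irreducible, so $m_\alpha$ equals the multiplicity of the corresponding complex irrep in $U_{\mb C}$, accounting for the summands $\mf{gl}(r,\mb R)$. For complex type, $(U_\alpha)_{\mb C}$ splits as a sum $W_\alpha\oplus\overline{W_\alpha}$ of a conjugate pair of distinct complex irreps, so $m_\alpha$ equals the common multiplicity of such a pair, producing the summands $\mf{gl}(c,\mb C)$. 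For quaternionic type, $(U_\alpha)_{\mb C}\cong W_\alpha^{\oplus 2}$ for a single self-conjugate complex irrep, so $m_\alpha$ is half the multiplicity of $W_\alpha$ in $U_{\mb C}$, which matches the pairing prescription in the statement and yields the summands $\mf{gl}(h,\mb H)$.

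The main obstacle I foresee is the careful treatment of the projective structure: one needs to be sure that a compact central extension $\tilde K$ can be chosen so that the lifted representation exists and that the equivariance of endomorphisms with respect to $\tilde K$ is equivalent to invariance under the conjugation $K$-action on $\mf{gl}(U)$. Once this is set up cleanly, the remaining bookkeeping between real and complex isotypic decompositions, and the application of Schur and Frobenius, are standard.
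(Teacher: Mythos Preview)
Your proposal is correct and follows exactly the route the paper indicates: the paper presents this theorem as a corollary of Lemma \ref{lem:aa1} combined with Schur's lemma, the Frobenius theorem, and complete reducibility of representations of compact groups, which is precisely your argument. You supply more detail than the paper does---in particular the passage to a compact central extension to linearise the projective representation, and the dictionary between real isotypic multiplicities and multiplicities in $U_{\mb C}$---but these are standard elaborations of the same strategy rather than a different approach.
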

We refer to Bourbaki \cite[Ch. VIII, \S 7, Proposition 12]{bourbaki2005lie} for a general method to determine the type of an irrep of a simple Lie algebra.

\begin{Example}[Common compact stabilisers] All two- and three-dimensional model geometries in the sense of Thurston \cite[\S 3.8]{thurston1997three} have compact stabilisers $K$ with connected component isomorphic to the trivial group or $\SO(2)$ or $\SO(3)$. The hyperbolic, spherical and Euclidean geometries $\hypsp{n}$, $\sphsp{n}$ and $\eucsp{n}$ have stabilisers $\NSO(n)$ in their isometry groups and stabilisers $\SO(n)$ in their orientation preserving isometry groups.
These cases are therefore expected to be most common in applications and deserve special attention.

The group $\SO(2)$ has only one irrep of real type: the trivial representation. All nontrivial irreducible projective representations of $\SO(2)$ are of complex type. Hence $$\ema{G}{\SO(2)}{\mf{gl}(U)}\cong \mf{gl}(r,\mb R)\oplus \bigoplus_{c}\mf{gl}(c,\mb C)$$ where $r=\dim U^{\SO(2)}$, cf.~Example \ref{ex:H2}.

The irreducible projective representations of $\SO(3)$ correspond to the irreps of its double cover $\SU(2)$. There is precisely one irrep of $\SU(2)$ of any finite dimension. It is of real type when the dimension is odd (and the representation descends to a representation of $\SO(3)$) and of quaternionic type if the dimension is even, so that
$$\ema{G}{\SO(3)}{\mf{gl}(U)} \cong  \bigoplus_{r}\mf{gl}(r,\mb R)\oplus \bigoplus_{h}\mf{gl}(h,\mb H).$$
\end{Example}

\begin{Example}[Quaternions]
\label{ex:quaternions}
Let $V_1$ be the natural representation of $\SU(2)$ of $2$ complex-dimensions. It is of quaternionic type. Hence $V_1\oplus V_1$ has a basis in which $\SU(2)$ is represented by matrices with real entries. Let $U$ be the corresponding ($4$ real-dimensional) representation. Then $\mf{gl}(U)$ is a real representation of $\SU(2)/\pm\Id\cong \SO(3)$ (and of its extensions). We write down a basis for $\mf{gl}(U)^{\SO(3)}$ to confirm it is isomorphic to the quaternions. 

With the embedding $\mb{C}^\ast\mapsto\GL(2,\mb{C})$ given by $a+bi\mapsto\begin{pmatrix}a&b\\-b&a\end{pmatrix}$ we construct real matrices
$$\rho_U\left(\begin{pmatrix}a+bi&-c+di\\c+di&a-bi\end{pmatrix}\right)=
\begin{pmatrix}a&b&-c&d\\-b&a&-d&-c\\c&d&a&-b\\-d&c&b&a\end{pmatrix}$$
for the representation $\rho_U:\SU(2)\to\GL(U)$.
The matrices that commute with the image of this representation are spanned by the identity matrix and 
$$I=\begin{pmatrix}0&-1&0&0\\1&0&0&0\\0&0&0&-1\\0&0&1&0\end{pmatrix},\quad
J=\begin{pmatrix}0&0&-1&0\\0&0&0&1\\1&0&0&0\\0&-1&0&0\end{pmatrix},\quad
K=\begin{pmatrix}0&0&0&-1\\0&0&-1&0\\0&1&0&0\\1&0&0&0\end{pmatrix}.$$
These matrices satisfy $I^2=J^2=K^2=IJK=-\Id$ and provide the isomorphism $\mf{gl}(U)^{\SO(3)}\cong \mb H$.
\end{Example}

We end this section with a detailed description of an interesting automorphic algebra illustrating Theorem \ref{thm:aa3}. This example has a second goal of illustrating the fact that we can hardwire Lorentz group-equivariance into a convolutional network even if the group is not compact. This can be compared to the approach of Bogatskiy et al.~\cite{bogatskiy2020lorentz} where a Lorentz group-equivariant network is used for the study of particle physics. The equivariance is incorporated as another learning task in this network, rather than being hardwired.

\begin{Example}[Hyperbolic space]
\label{ex:hyperbolic space}
In this example we will compute the algebra of maps from hyperbolic $3$-space to the algebra of matrices $\mf{gl}(U)$ from Example \ref{ex:quaternions}, which are equivariant under the Lorentz group, and exhibit its isomorphism to the quaternion algebra.

Hyperbolic $3$-space $\hypsp{3}$ will be identified with the hyperboloid model;
$$\hypsp{3}=\{(t,x,y,z)\in\mb R_{\ge 1}\times \mb R^3\,\vert \,-t^2+x^2+y^2+z^2=-1\},$$
a component of a level set of the quadratic form 
$$Q(t,x,y,z)=-t^2+x^2+y^2+z^2.$$
The group of linear transformations preserving $Q$ is denoted $\NSO(1,3)$ and corresponds to the group of automorphisms of $\hypsp{3}$. The connected component $G\coloneqq \SO^+(1,3)$ containing the identity  acts transitively on $\hypsp{3}$ (as we will find below) and the stabiliser subgroups are isomorphic to $\SO(3)$.

There is an isomorphism of real Lie groups between $\SO^+(1,3)$ and $\PSL(2,\mb C)$, which can be understood using the Weyl representation that we now describe.
The space of $2\times 2$ hermitian matrices of determinant $1$ has two connected components distinguished by the sign of the trace. Its component of matrices with positive trace is in one-to-one correspondence with Hyperbolic space by the map
$$\phi:(t,x,y,z)\mapsto A=\begin{pmatrix} t+z&x-iy\\x+iy&t-z\end{pmatrix}.$$
The group $\PSL(2,\mb C)$ acts on this space by $$[g]A=gAg^*$$ where $g^*$ is the conjugate transpose of $g$ (it is easy to see that $gAg^*$ is hermitian and has determinant $1$. To argue that its trace is positive we can use the fact that $\SLNC[2]$ is connected, so that $gAg^*$ is in the same connected component as $\Id\,A\,\Id^*=A$). When this action of $\PSL(2,\mb C)$ is translated to $\hypsp{3}$ using $\phi$ one finds an isomorphism $\PSL(2,\mb C)\to \SO^+(1,3)$. We will continue this example using the Weyl representation.

We choose a base point $x_0\coloneqq (1,0,0,0)\in\hypsp{3}$. Its stabiliser is $H\coloneqq \PSU(2)$. To find a map $f:\hypsp{3}\to \PSL(2,\mb C)$ such that $f(x)x_0=x$, we need to solve 
\beq{eq:Lorentz}
gg^*=A
\eeq for $g\in \SL(2,\mb C)$. 
Since $A$ is hermitian, we can write it as $$A=PDP^*$$ with $D=\diag(\lambda^+,\lambda^-)$ where $$\lambda^{\pm}=t\pm\sqrt{t^2-1}$$ are the eigenvalues of $A$ and $P\in \SU(2)$ (uniquely defined up to right multiplication by $\diag(\zeta,\zeta^*)$, $\zeta\in\NSU(1)$).
We then find a solution to (\ref{eq:Lorentz}) given by $$g=P\sqrt{D}P^*$$ where $\sqrt{D}$ is defined as $\diag(\sqrt{\lambda^+},\sqrt{\lambda^-})$, and hence an explicit isomorphism $$\hypsp{3}\to \PSL(2,\mb C)/\PSU(2),\quad (t,x,y,z)\mapsto \pm g \PSU(2).$$ The matrix $g$ thus constructed reads
\beq{eq:g}
g=\frac{1}{\sqrt{2}\sqrt{t+1}}\begin{pmatrix}
t+1+z
&
x-iy
\\
x+iy
&
t+1-z
\end{pmatrix}.
\eeq

Let us write down a basis for the automorphic algebra  $\aaa{\SO(1,3)^+}{\hypsp{3}}{\mf{gl}(U)}$ where $U$ is the representation defined in Example \ref{ex:quaternions} in which we computed a basis $\{1,I,J,K\}$ for $\mf{gl}(U)^{\SO(3)}$. 

The group element (\ref{eq:g}) acts on $U$ with the matrix
$$\rho_U(g)=\frac{1}{\sqrt{2}\sqrt{t+1}}\begin{pmatrix}
t+1+z&0&x&-y\\
0&t+1+z&y&x\\
x&y&t+1-z&0\\
-y&x&0&t+1-z
\end{pmatrix}$$
and on $\mf{gl}(U)$ by conjugation with this matrix. Applying $g$ to $I,J$ and $K$ respectively yields
\begin{align*}
\mc I&=\left(\begin{array}{rrrr}
0 & -1 & 0 & 0 \\
1 & 0 & 0 & 0 \\
0 & 0 & 0 & -1 \\
0 & 0 & 1 & 0
\end{array}\right)
\\[2mm]
\mc J
&=\left(\begin{array}{rrrr}
x & y & -t-z & 0 \\
y & -x & 0 &t+z \\
t-z & 0 & -x & y \\
0 & -t+z & y & x
\end{array}\right)
\\[2mm]
\mc K&=\left(\begin{array}{rrrr}
-y & x & 0 & -t-z \\
x & y & -t-z & 0 \\
0 & t-z & -y & -x \\
t-z & 0 & -x & y
\end{array}\right)
\end{align*}
Together with the identity matrix, these form a basis for $\aaa{\SO(1,3)^+}{\hypsp{3}}{\mf{gl}(U)}$ due to Theorem \ref{thm:aa3}. The quaternion algebra can thus be presented by matrix-valued functions on hyperbolic space equivariant under the Lorentz group.
\end{Example}

\section{Discussion}
\label{sec:discussion}
We have described how the problem of computing all vector-valued equivariant maps on a homogeneous space can be reduced to finite dimensional linear algebra with a geometric version of Frobenius reciprocity.

If $G$ is compact then one can take the alternative approach of Fourier analysis to find the invariants. Representations of compact groups are completely reducible, so that $L^2(\hs)$ and $V$ decompose into irreps, and each pair of isomorphic irreps in these respective modules gives rise to one invariant in $L^2(\hs)\otimes V\cong L^2(\hs,V)$. Such an approach is presented by Lang and Weiler \cite{lang2020wigner} and Lombardo, Sanders and the author in \cite{knibbeler2017higher}.
The hard part of this approach is the determination of the decomposition of $L^2(\hs)$. For important cases the solution is readily available. E.g.~the decomposition of $L^2(\SO(2))$ is known from classical Fourier analysis and $L^2(\SO(3)/\SO(2))$ from harmonic analysis on the two dimensional sphere.

The approach presented in this paper has the advantage that $G$ can be noncompact, Fourier analysis is not needed, irreps of $G$ need not be determined and neither do isotypical components of $L^2(\hs)$. 
This simple approach is possible because the domains $\hs$ considered here are $G$-orbits, hence any equivariant map is determined by one value, contrary to the situation of automorphic Lie algebras where $\hs/G$ is a Riemann surface.
This also shows the disadvantage of our approach. If $\hs$ is not a homogeneous space, there is more work to be done (see Appendix \ref{app:from homogeneous space to R^d}).

For the study of algebra structures our construction has a second advantage: it preserves the algebra structure. For this reason there is no need to compute structure constants and find normal forms after the invariants are determined, which is one of the major challenges in the research on automorphic Lie algebras to date.

Another natural approach to solve a Lie group-invariance constraint is to start with differentiation and reduce it to a linear problem defined by the generators of the Lie algebra. Finzi, Welling and Wilson \cite{finzi2021practical} used this approach in the context of geometric deep learning. The authors show how the resulting linear system can be effectively solved using the singular value decomposition in case the representation is finite dimensional, and they made their software library publicly available. For the infinite dimensional representation $\mapthree{G}{H}{W}$ considered in this paper, and commonly in geometric deep learning, the linearised problem is a system of partial differential equations. Solving this system is much harder than the method presented in this paper, and is therefore no longer practical. However, our results reduce the problem of finding $G$-invariants in the infinite dimensional function space to finding $H$-invariants in the finite dimensional representation $W$ of $H$. For high-dimensional representations $W$ the approach and software of \cite{finzi2021practical} can be used to compute $W^H$ and complete the computation of $\mapthree{G}{H}{W}^G$.

We hope that the simplicity of the presented method, and its applicability to noncompact groups, will make it useful to the designers of convolutional neural networks with symmetries. 
Furthermore, we hope that this material shall serve as a starting point for the study of automorphic Lie algebras in unexplored geometries such as hyperbolic $3$-space or Siegel upper-half space, and uncover new Lie structures.

\appendix
\section{The Gram-Schmidt process and $\sphsp{n}\cong\SO(n+1)/\SO(n)$}
\label{app:spheres as homogeneous space}
In order to compute the space of $G$-equivariant maps from a homogeneous space $X\cong G/H$ to a representation $V$ of $G$, we propose to find an explicit isomorphism $X\cong G/H$ first. That is, obtain a map $f:X\to G$ such that $f(x)x_0 = x$ for any $x\in X$, where $x_0$ is a `base point' of our choice. As we mentioned in Section \ref*{sec:vector}, we are not aware of a general method to construct such a map (in the mathematical literature, one is usually satisfied knowing that it exists, and there is no need for a construction), but for specific families of homogeneous spaces there are constructions available. In this section we solve the question for the spheres
\[\sphsp{n}_r=\{(x_1,\ldots,x_{n+1})\in\mb{R}^{n+1}\,|\,x_1^2+\ldots+x_{n+1}^2=r^2\}\cong\SO(n+1)/\SO(n),\quad r>0,\] 
using the Gram-Schmidt process. 

To warm up, we consider $\sphsp{1}_r=\{(x,y)\in\mb{R}^{2}\,|\,x^2+y^2=r^2\}$. When we pick the base point $x_0=(r,0)$ we need to find $f_1:\sphsp{1}_r\to \SO(2)$ (the subscript of $f$ refers to the dimension of the sphere) such that $f_1(x,y)(r,0)^T = (x,y)^T$. That is, 
\[f_1(x,y)=\begin{pmatrix}x/r&\ast\\y/r&\ast\end{pmatrix}.\] 
It remains to replace the $\ast$ by entries which ensure that $f_1(x,y)\in\SO(2)$. In this case, we quickly find the solution 
\[f_1(x,y)=\begin{pmatrix}x/r&-y/r\\y/r&x/r\end{pmatrix}.\] 

Going one dimension higher, we consider $\sphsp{2}_r=\{(x,y,z)\in\mb{R}^{3}\,|\,x^2+y^2+z^2=r^2\}$ and pick the base point $x_0=(r,0,0)$. Then we need to construct an orthogonal matrix $f_2(x,y,z)$ with first column $(x/r,y/r,z/r)^T$. This time it is harder to find a solution. We will return to this problem shortly.

The Gram-Schmidt process takes a set of linearly independent vectors $\{v_1,\ldots,v_k\}$ in a vector space with inner product $\langle\cdot,\cdot\rangle$ and returns a set of orthogonal vectors $\{u_1,\ldots,u_k\}$ with the same linear span. If we write the projection of $v$ on a nonzero vector $u$ as 
\[P_u(v)=\frac{\langle u,v\rangle}{\langle u,u\rangle}u,\] then the orthogonal set of vectors is given by
\begin{align*}
    u_1&=v_1\\
    u_2&=v_2-P_{u_1}(v_2)\\
    u_3&=v_3-P_{u_1}(v_3)-P_{u_2}(v_3)\\
    &\vdots
    \\u_k&=v_k-\sum_{i=1}^{k-1} P_{u_i}(v_k).
\end{align*}
To transform this orthogonal set $\{u_1,\ldots,u_k\}$ into an orthonormal set $\{e_1,\ldots,e_k\}$ one computes $e_i=u_i/\sqrt{\langle u_i, u_i\rangle}$. 

Let us return to the problem of constructing an orthogonal matrix $f_2(x,y,z)$ with first column $(x/r,y/r,z/r)^T$. A solution to this problem is not unique. Indeed, any solution can be multiplied on the right by a matrix of the form 
\[
    \begin{pmatrix}
        1&0&0\\0&\cos \theta&-\sin\theta\\0&\sin\theta&\cos \theta
    \end{pmatrix}
\]
because this multiplication preserves the first column, orthogonality and the determinant. Using this wiggle room we can ensure that $f_2$ is of the form
\[
    f_2(x,y,z)=
    \begin{pmatrix}
        x/r&\ast&0\\
        y/r&\ast&\ast\\
        z/r&\ast&\ast    
    \end{pmatrix}.
\]
Next we construct orthogonal columns with the Gram-Schmidt process. Due to the zero on the top right of the matrix $f_2(x,y,x)$, we can save ourselves some work by computing columns from right to left. At the right, we start with a vector $(0,1,0)^T$. Instead of computing its component orthogonal to $(x/r,y/r,z/r)$, we use  $(0,y/r,z/r)$ in order to preserve the first zero. That is, we compute $(0,1,0)-P_{(0,y/r,z/r)}(0,1,0)=\left(0,\frac{z^2}{y^2+z^2},\frac{-yz}{y^2+z^2}\right)$ (which is only defined when $y^2+z^2>0$). This vector normalises to $\left(0,\frac{z}{\sqrt{y^2+z^2}},\frac{-y}{\sqrt{y^2+z^2}}\right)$ and is used for the right column of $f_2$.

For the middle column, we start with a vector $(1,0,0)^T$ which is already orthogonal to the right column, leaving us only to compute its component orthonormal to the left column: $(1,0,0)-P_{(x/r,y/r,z/r)}(1,0,0)=\left(\frac{y^2+z^2}{r^2},-\frac{xy}{r^2},-\frac{xz}{r^2}\right)$, which normalises to $\left(\frac{\sqrt{y^2+z^2}}{r},\frac{-xy}{r\sqrt{y^2+z^2}},\frac{-xz}{r\sqrt{y^2+z^2}}\right)$, resulting in the matrix
\newcommand{\myvspace}{2mm}
\[
    f_2(x,y,z)=
    \begin{pmatrix}
        \frac{x}{r}&\frac{\sqrt{y^2+z^2}}{r}&0\\[\myvspace]
        \frac{y}{r}&\frac{-xy}{r\sqrt{y^2+z^2}}&\frac{z}{\sqrt{y^2+z^2}}\\[\myvspace]
        \frac{z}{r}&\frac{-xz}{r\sqrt{y^2+z^2}}&\frac{-y}{\sqrt{y^2+z^2}}    
    \end{pmatrix},\quad y^2+z^2>0.
\]
By construction, we have $f_2(x,y,z)\in\NSO(3)$, and we can check that the determinant of $f$ is $1$ hence $f_2(x,y,z)\in\SO(3)$. It remains to define $f_2$ for the case $y^2+z^2=0$, i.e. $x=\pm r$. Here we can simply put $f_2(r,0,0)=\Id$ and $f_2(-r,0,0)=\diag(-1,-1,1)$.

In much the same way, we can construct a map $f_n:\sphsp{n}_r\to \SO(n+1)$ solving $f(x)x_0=x$, where $x=(x_1,x_2,\ldots,x_{n+1})$. Generally, the equation leaves the freedom to multiply a solution $f(x)$ by an element of the stabiliser subgroup $G_{x_0}=H$ on the right. In our particular case we choose $x_0=(r,0,0,\ldots,0)^T$ and the stabiliser subgroup consists of block-diagonal matrices of the form $\diag(1,A)$ where $A\in\SO(n)$. It can be used to ensure that the first row of $f_n(x_1,\ldots,x_{n+1})$ has the form $(x_1/r, \ast, 0,0,\ldots,0)$. Likewise, an element $\diag(1,1,A)\in H$ where $A\in\SO(n-1)$ can be used to ensure the second row of $f_n(x_1,\ldots,x_{n+1})$ has the form $(x_2/r, \ast, \ast, 0,0,\ldots,0)$, without changing the first row. By continuing this argument for the whole sequence of subgroups $\SO(n)\supset \SO(n-1)\supset\ldots\supset \SO(2)$ of the stabiliser, we can assume without loss of generality that $f_n$ takes the form 
\[
  f_n(x_1,\ldots,x_{n+1})=
  \begin{pmatrix}
    x_1/r&\ast&0&0&0&\ldots&\ldots&0\\
    x_2/r&\ast&\ast&0&0&\ldots&\ldots&0\\
    x_3/r&\ast&\ast&\ast&0&\ldots&\ldots&0\\
    \vdots&\vdots&\vdots&\vdots&\ddots&\ddots&&\vdots\\
    \vdots&\vdots&\vdots&\vdots&&\ddots&\ddots&\vdots\\
    x_{n-1}/r&\ast&\ast&\ast&\ldots&\ldots&\ast&0\\
    x_{n}/r&\ast&\ast&\ast&\ldots&\ldots&\ast&\ast\\
    x_{n+1}/r&\ast&\ast&\ast&\ldots&\ldots&\ast&\ast\\
  \end{pmatrix}.  
\]
We will compute orthonormal columns for this matrix from right to left. For the right most column, we start with the vector $(0,\ldots,0,1,0)$ and obtain a norm 1 vector orthogonal to $(0,\ldots,0,x_{n}/r,x_{n+1}/r)$ by computing $(0,\ldots,0,1,0)-P_{(0,\ldots,0,x_{n}/r,x_{n}/r)}(0,\ldots,0,1,0)$ 
and subsequently normalising to get $\left(0,\ldots,0,\frac{x_{n+1}}{\sqrt{x_{n}^2+x_{n+1}^2}},-\frac{x_{n}}{\sqrt{x_{n}^2+x_{n+1}^2}}\right)$.

Consider now the column of $f_n(x)$ at position $k+1$, and assume all columns to its right fit the form above, and constitute an orthogonal set of vectors together with the vector $(0,\ldots,0,x_k/r,\ldots,x_{n+1}/r)$.
We can apply the Gram-Schmidt process for this set of vectors together with the vector $(0,\ldots,0,1,0,\ldots,0)^T$ where the nonzero entry is at position $k$, and see that all but one of the projections to be computed are zero, leaving us only to compute
$(0,\ldots,0,1,0,\ldots,0)-P_{(0,\ldots,0,x_k/r,\ldots,x_{n+1}/r)}((0,\ldots,0,1,0,\ldots,0))$ which equals 
\[\frac{1}{\sum_{i=k}^{n+1}x_i^2}(0,\ldots,0,\sum_{i=k+1}^{n+1}x_i^2,-x_kx_{k+1},-x_kx_{k+2},\ldots,-x_kx_{n+1}).\] 
It is convenient to introduce the notation
\[r_k=\sqrt{\sum_{i=k}^{n+1}x_i^2},\quad k=1,\ldots,n,\]
because the norm of the last vector is $r_{k+1}/r_k$, and its normalisation reads
\[\left(0,\ldots,0,\frac{r_{k+1}}{r_k},-\frac{x_k x_{k+1}}{r_k r_{k+1}},\ldots,-\frac{x_k x_{n+1}}{r_k r_{k+1}}\right).\]
Notice that $r_{n}=\sqrt{x_n^2+x_{n+1}^2}>0$ implies $r_{i}>0$ for $i=1,\ldots,n$, hence all these vectors are defined if and only if $x_n^2+x_{n+1}^2> 0$. 

Using these vectors, for $k=1,\ldots,n$, as columns for $f_n(x)$, we obtain for $x_n^2+x_{n+1}^2> 0$
\[
  f_n(x)=
  \begin{pmatrix}
    \frac{x_1}{r_1}&\frac{r_2}{r_1}&0&0&0&\ldots&\ldots&0\\[\myvspace]

    \frac{x_2}{r_1}&-\frac{x_1 x_{2}}{r_1 r_{2}}&\frac{r_3}{r_2}&0&0&\ldots&\ldots&0\\[\myvspace]

    \frac{x_3}{r_1}&-\frac{x_1 x_{3}}{r_1 r_{2}}&-\frac{x_2 x_{3}}{r_2 r_{3}}&\frac{r_4}{r_3}&0&\ldots&\ldots&0\\[\myvspace]

    \vdots&\vdots&\vdots&\vdots&\ddots&\ddots&&\vdots\\[\myvspace]

    \vdots&\vdots&\vdots&\vdots&&\ddots&\ddots&\vdots\\[\myvspace]

    \frac{x_{n-1}}{r_1}&-\frac{x_1 x_{n-1}}{r_1 r_{2}}&-\frac{x_2 x_{n-1}}{r_2 r_{3}}&-\frac{x_3 x_{n-1}}{r_3 r_{4}}&\ldots&\ldots&\frac{r_{n}}{r_{n-1}}&0\\[\myvspace]

    \frac{x_{n}}{r_1}&-\frac{x_1 x_{n}}{r_1 r_{2}}&-\frac{x_2 x_{n}}{r_2 r_{3}}&-\frac{x_3 x_{n}}{r_3 r_{4}}&\ldots&\ldots&-\frac{x_{n-1} x_{n}}{r_{n-1} r_{n}}&(-1)^{n+1}\frac{x_{n+1}}{r_n}\\[\myvspace]

    \frac{x_{n+1}}{r_1}&-\frac{x_1 x_{n+1}}{r_1 r_{2}}&-\frac{x_2 x_{n+1}}{r_2 r_{3}}&-\frac{x_3 x_{n+1}}{r_3 r_{4}}&\ldots&\ldots&-\frac{x_{n-1} x_{n+1}}{r_{n-1} r_{n}}&(-1)^{n}\frac{x_n}{r_n }\\
  \end{pmatrix}.  
\]
In the right column of this matrix we added a factor $(-1)^{n+1}$ in order to have determinant $1$. To see this, notice that the function $\det\circ f_n$ is continuous and takes values in the discrete set $\{1,-1\}$ due to orthonormality, hence it is constant on the connected domain $\{x_n^2+x_{n+1}^2> 0\}=\sphsp{n}\setminus\sphsp{n-2}$, and we can compute the value of $\det\circ f_n$ by evaluating in one point. For instance, $\det f_n(0,\ldots,0, r)=1$. Thus, we have $f_n(x)\in\SO(n+1)$ when $x_n^2+x_{n+1}^2> 0$.

We conclude using induction. If $x_n^2+x_{n+1}^2=0$ then $(x_1,\ldots,x_{n-1})\in\sphsp{n-2}$ and we define $f_n(x)$ as the block diagonal matrix $\diag(f_{n-2}(x),1,1)$. Given that we already defined $f_1$ and $f_2$, this provides $f_n$ for all $n\ge 1$.

It is worth noting that $f_n$ is constant on lines emanating from the origin. That is, $f_n(\lambda x)=f_n(x)$ for any $\lambda\ge 0$ and any $x\ne 0$. All values of $f_n$ are therefore achieved on $\sphsp{n}_1$. 

\section{From homogeneous space to $\mb{R}^d$}
\label{app:from homogeneous space to R^d}
This appendix concerns the application of this paper to geometric deep learning and is intended as a starting point for future research.

In geometric deep learning, one encounters a group $G\in \GL(\mb{R}^d)$ and is interested in $G$ equivariant maps from $\mb{R}^d$ to a representation $V$ of $G$, as we explained in the introduction. In the body of this paper we restrict the problem to $G$-orbits in $\mb{R}^d$. Whether this can be extended to the whole space depends on the particular group and particularly the quotient space $\mb R^d/G$. 

We solve the problem for the spheres in $\mb{R}^d$ with Proposition \ref{prop:SO(n) equivariant maps on R^d}, where the quotient space is a manifold (appearing as $[0,\infty)$ in the proposition). A similar result for the case $d=3$ can be found in \cite{weiler20183d}. Afterwards we discuss the situation for hyperbolic spaces in $\mb{R}^d$, where the quotient space is not Hausdorff. 

This illustrates that future research should not aim for a general solution but rather focus on the application. In geometric deep learning, the equivariant maps are used as convolution kernels in an integral. Hence, it is harmless to exclude a set of measure zero, and one could restrict to a full measure subset of the quotient space $\mb R^d/G$ which is better behaved, allowing also solutions for hyperbolic spaces in $\mb{R}^d$.

In the setting of this appendix, we can choose what class of functions we want to study, unlike the situation for homogeneous spaces, where the function class is determined by the group actions. This appendix is written for continuous maps, but the proofs can be modified to work for other classes of maps.

\begin{Proposition}
    \label{prop:SO(n) equivariant maps on R^d}
    Let $V$ be a continuous representation of $\SO(d)$, $H\subset \SO(d)$ the stabiliser of $(1,0,\ldots,0)\in\mb{R}^d$, and $\{v_1,\ldots,v_m\}$ a basis of $V^H$, extending a basis $\{v_1,\ldots,v_{m'}\}$ of $V^{\SO(d)}$.
    Then the space of continuous $\SO(d)$-equivariant maps from $\mb{R}^d$ to $V$ is given by 
    \[
        C_{\SO(d)}\left(\mb{R}^d,V\right)=
        \left\{x\mapsto
            \sum_{i=1}^m c_i(r) f_{n}(x) v_i\,|\,c_i\in C^{}([0,\infty),\mb{R})
        ,\,c_i(0)=0\text{ if }i>m'
        \right\}
    \]
    where $x=(x_1,\ldots,x_d)\in\mb{R}^d$ and $r=\sqrt{x_1^2+\ldots+{x_d}^2}$, and $f_{n}:\mb{R}^d\to\SO(d)$ the map defined in Appendix \ref{app:spheres as homogeneous space} with $n=d-1$, extended to the origin by $f_{n}(0)=\Id$.
\end{Proposition}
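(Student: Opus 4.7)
The plan is to combine the orbit-by-orbit description given by Theorem \ref{thm:aa2} with a continuity analysis in the radial parameter. The space $\mb{R}^d$ decomposes into $\SO(d)$-orbits consisting of the origin and the spheres $\sphsp{d-1}_r$ for $r>0$. For $r>0$ the sphere is a single orbit, and since $H$ fixes $r e_1$ and $f_n(x)$ sends $r e_1$ to $x$ (because $f_n(x) e_1 = x/|x|$ and the action is linear), Theorem \ref{thm:aa2} identifies equivariant maps $\sphsp{d-1}_r \to V$ with $V^H$ via $v \mapsto [x \mapsto f_n(x)v]$. Hence any continuous equivariant $K:\mb{R}^d\to V$ must restrict on each sphere to $K(x)=\sum_{i=1}^m c_i(r) f_n(x) v_i$, defining functions $c_i$ on $(0,\infty)$.

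To pin down the behaviour of the $c_i$ at the origin, I would restrict $K$ to the positive $x_1$-axis. Since the construction in Appendix \ref{app:spheres as homogeneous space} yields $f_n(r e_1)=\Id$ for every $r\geq 0$, we have $K(r e_1)=\sum_{i=1}^m c_i(r) v_i$. Continuity of this one-variable restriction together with linear independence of $\{v_1,\ldots,v_m\}$ implies each $c_i$ is continuous on $[0,\infty)$. Moreover $0$ is $\SO(d)$-fixed, so equivariance forces $K(0)\in V^{\SO(d)}=\mathrm{span}(v_1,\ldots,v_{m'})$; expanding in the basis gives $c_i(0)=0$ for $i>m'$.

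For the converse I would verify that every function of the stated form is continuous and $\SO(d)$-equivariant. Equivariance is routine: for $g\in\SO(d)$ both $f_n(gx)$ and $g f_n(x)$ take $e_1$ to $gx/|gx|$, so they differ by a right factor in $H$, which fixes each $v_i\in V^H$. Continuity on $\mb{R}^d\setminus\{0\}$ is more delicate because the piecewise formula for $f_n$ is itself discontinuous. The key observation is that for $v_i\in V^H$ the assignment $x\mapsto f_n(x) v_i$ depends only on the coset $f_n(x) H$, and thus factors as $x\mapsto x/|x|\mapsto g v_i$ through the continuous orbit map $\SO(d)/H\cong\sphsp{d-1}\to V$. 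Continuity at the origin then follows from boundedness of $\SO(d)$-orbits together with the boundary condition: for $i\leq m'$ the summand reduces to $c_i(|x|)v_i\to c_i(0) v_i$, while for $i>m'$ the bound $\|c_i(|x|) f_n(x) v_i\|\leq |c_i(|x|)|\cdot\|v_i\|$ combined with $c_i(0)=0$ drives the contribution to zero.

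The main obstacle is precisely the continuity argument at points where $f_n$ itself breaks and at the origin where $f_n$ has no limit at all. Both difficulties are resolved by replacing the explicit piecewise formula for $f_n$ with its intrinsic role as a coset representative in $\SO(d)/H$, which is exactly the situation for which the condition $v_i\in V^H$ is designed: the arbitrariness in the choice of $f_n$ is precisely absorbed by the stabiliser action on $V^H$, making the map well-defined and continuous.
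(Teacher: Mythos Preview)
Your proposal is correct and follows essentially the same route as the paper's proof: restrict to spheres via Theorem \ref{thm:aa2}, read off the coefficients $c_i$ along the ray $r e_1$ using $f_n(r e_1)=\Id$, and handle continuity at the origin by splitting into the $i\le m'$ and $i>m'$ cases. The only cosmetic difference is in the continuity argument away from the origin: you phrase it by observing that $x\mapsto f_n(x)v_i$ factors through the continuous orbit map $\sphsp{d-1}\cong\SO(d)/H\to V$, whereas the paper verifies the same fact by a direct limit computation $\lim_{y\to x} f_n(y)v_i=\lim_{g\to 1} g\,f_n(x)v_i$; both arguments express the same point that only the coset $f_n(x)H$ matters.
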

\begin{proof}
    The set on the right-hand side of the equation will be denoted RHS, that is
    \[
        \text{RHS}=
        \left\{x\mapsto
        \sum_{i=1}^m c_i(r) f_{n}(x) v_i\,|\,c_i\in C^{}([0,\infty),\mb{R})
        ,\,c_i(0)=0\text{ if }i>m'
        \right\}.
    \]
    We will show first that RHS is contained in $C_{\SO(d)}\left(\mb{R}^d\setminus\{0\},V\right)$. To check that a map $x\mapsto\sum_{i=1}^m c_i(r) f_{n}(x) v_i$ in RHS is equivariant, we pick a generic element $g\in\SO(n)$ and compute
    \begin{align*}
        \sum_{i=1}^m c_i(gr) f_{n}(gx) v_i
        &=
        \sum_{i=1}^m c_i(r) f_{n}(gx) v_i\\
        &=
        \sum_{i=1}^m c_i(r) gf_{n}(x) v_i\\
        &=g\sum_{i=1}^m c_i(r) f_{n}(x) v_i
    \end{align*} 
    where the second equality follows from Theorem \ref{thm:aa2}.
    
    The next thing to do is to check continuity (and we will find that the discontinuity of $f_{n}$ is irrelevant). First pick a nonzero $x\in\mb{R}^d$ and restrict to the sphere $\sphsp{n}$ containing $x$ (and centered at the origin). There, $f_{n}(x) v_i$ is continuous due to equivariance and the properties of the group actions. Indeed, the limit $\lim_{y\to x}f_{n}(y) v_i$ with $y\in\sphsp{n}$ can be written as $\lim_{g\to 1}f_{n}(gx) v_i$ with $g\in\SO(d)$ because the action of $\SO(d)$ on $\sphsp{n}$ is continuous and transitive. Equivariance equates this to $\lim_{g\to 1}\left(gf_{n}(x) v_i\right)$ and continuity of the action on $V$ shows this limit is $\left(\lim_{g\to 1}g\right)f_{n}(x) v_i=f_{n}(x) v_i$.

    To see that $c_i(r)f_{n}(x) v_i$ depends continuously on $x$ at any nonzero $x$, recall that $f_{n}(\lambda x)=f_{n}(x)$ for any $\lambda>0$, and $c_i$ is continuous.

    For continuity at $x=0$, we use the fact that $v_i\in V^{\SO(d)}$ when $i\le m'$ which implies $c_i(r)f_{n}(x) v_i=c_i(r)v_i$ where continuity follows from continuity of $c_i$. Let now $i>m'$. Then $\lim_{x\to 0}c_i(r)f_{n}(x)v_i=0$ since $c_i$ is continuous, $c_i(0)=0$ and all entries of $f_{n}(x)\in\SO(d)$ are absolutely bounded by $1$. This limit corresponds to the value $c_i(0)f_{n}(0)v_i$. Thus, we see that the sum $\sum_{i=1}^m c_i(r) f_{n}(x) v_i$ is indeed continuous on $\mb R^d$, and $\text{RHS}\subset C_{\SO(d)}\left(\mb{R}^d,V\right)$.

    It remains to prove that the reverse inclusion $C_{\SO(d)}\left(\mb{R}^d,V\right)\subset \text{RHS}$ holds. Let $F$ be an element of $C_{\SO(d)}\left(\mb{R}^d\setminus\{0\},V\right)$. Then $F(r,0,\ldots,0)\in V^H$ for any $r\ge 0$, since $hF(r,0,\ldots,0)=F(h(r,0\ldots,0))=F(rh(1,0\ldots,0))=F(r,0\ldots,0)$. Hence, we can expand $F(r,0,\ldots,0)$ in a basis of $V^H$, i.e. there exists functions $c_i$ such that 
        \[
        F(r,0,\ldots,0)=\sum_{i=1}^m c_i(r)v_i.
    \]
    The functions $c_i$ must be continuous because $F(r,0,\ldots,0)$ depends continuously on $r$. Moreover, $c_i(0)=0$ when $i>m'$ since $F(0)\in V^G$ due to equivariance.
    Define a map by $D(x)=F(x)-\sum_{i=1}^m c_i(\sqrt{x_1^2+\ldots+{x_d}^2}) f_{n}(x)v_i$. This map is equivariant by the first half of this proof, and it vanishes at $(r,0,\ldots,0)$ for any $r\ge 0$, since $f_{n}(r,0\ldots,0)=\Id$. Let $x\in\mb{R}^d\setminus\{0\}$ and $r=\sqrt{x_1^2+\ldots+{x_d}^2}$. Then there exists $g\in\SO(d)$ such that $g(r,0,\ldots,0)=x$. Therefore, $D(x)=D(g(r,0\ldots,0))=gD(r,0,\ldots,0)=g 0=0$. That is, $F(x)=\sum_{i=1}^m c_i(r) f_{n}(x)v_i$ and $F\in \text{RHS}$.
\end{proof}

Hyperbolic space of dimension $n$ can be embedded in $\mb R^d$, $d=n+1$, as one component of the level set at $-1$ of the quadratic form
\[Q(t,x_1,\ldots,x_n)=-t^2+x_1^2+\ldots+x_n^2.\]
This is an orbit of the connected group $\SO^+(1,n)\subset \GL(d,\mb R)$ defined by preservation of $Q$. 

We treated different realisations of $2$- and $3$-dimensional hyperbolic space in Example \ref{ex:H2} and Example \ref{ex:hyperbolic space} respectively. Now we start with the $1$-dimensional example. In that case we have the quadratic form $Q(t,x)=-t^2+x^2$ which is preserved by the group
\[
    \SO^+(1,1)=\left\{\begin{pmatrix}\cosh \theta&\sinh \theta\\\sinh \theta&\cosh \theta\end{pmatrix}\,|\,\theta\in\mb R\right\}.
\]
Each nonzero point $(t,0)$ and $(0,x)$ is contained in a unique orbit of this group. This describes all but five of the orbits. The remaining orbits constitute the level set $\{Q=0\}=\{t^2=x^2\}$: four diagonal rays and the origin. The latter five orbits cannot be separated by open sets, hence $\mb R^2/\SO^+(1,1)$ is not Hausdorff and therefore not a manifold, and we cannot do calculus on this space. However, if we take away the diagonals $t^2=x^2$, the remaining quotient space $(\mb R^2\setminus\{t^2=x^2\})/\SO^+(1,1)$ is homeomorphic to the union of four open intervals, which is a manifold. Given that the use of the equivariant maps in deep learning for this example, is in an integral over $\mb R^2$, it is harmless to omit the set $\{t^2=x^2\}$ of measure zero in $\mb R^2$ from the domain of these equivariant maps.

To describe continuous $\SO^+(1,1)$-equivaiant maps $\mb R^2\setminus\{Q\ne 0\}\to V$, for any continuous representation $V$ of $\SO^+(1,1)$, we can proceed as we did in Proposition \ref{prop:SO(n) equivariant maps on R^d}. The equivariant maps are given by the sums 
\[\sum_{i=1}^m c_i(t,x)f(t,x)v_i\]
where each $c_i$ is a continuous function, and each $c_i(t,x)$ depends only on the $\SO^+(1,1)$-orbit containing $(t,x)$, and where
$f(t,x)$ is the matrix $\frac{1}{\sqrt{-Q}}\begin{pmatrix}t&x\\x&t\end{pmatrix}$ when $Q<0$ and 
$\frac{1}{\sqrt{Q}}\begin{pmatrix}x&t\\t&x\end{pmatrix}$ when $Q>0$.

In higher dimensions, the quotient space $\mb R^d/\SO^+(1,n)$ behaves similarly. The level set $\{Q=0\}=\{t^2=\langle x,x\rangle\}$ consists of $3$ orbits of $\SO^+(1,n)$, with $t=0$, $t>0$ and $t<0$. These orbits cannot be separated by open sets, and $\mb R^d/\SO^+(1,n)$ is not Hausdorff.
At positive values of $Q$, the level set is one orbit of $\SO^+(1,n)$, and at negative values of $Q$, the level set consists of two orbits (hyperbolic spaces). The quotient of the full measure subset $(\mb R^d\setminus\{Q=0\})$ by the action of $\SO^+(1,n)$ is homeomorphic to the union of three open intervals.

\section*{Acknowledgements}
We are grateful to Jan E.~Gerken, Sara Lombardo, Fahimeh Mokhtari, Jan A.~Sanders, Alexander Veselov and Maurice Weiler for helpful and stimulating discussions, 
and an anonymous reviewer for their thorough and thought-provoking feedback.

\section*{Declarations}
\subsection*{Funding and/or Competing interests.}
This work is partially supported
by the London Mathematical Society through an Emmy Noether Fellowship, REF EN-2122-03. The author has no relevant financial or non-financial
interests to disclose.

\printbibliography
\end{document}